\documentclass[11pt,a4paper]{amsart}
\usepackage{amsmath,amsfonts}
\usepackage{graphicx}
\usepackage{amssymb}
\usepackage{amsthm}
\usepackage{yfonts}
\usepackage{kpfonts}
\usepackage{float}

\usepackage[usenames,dvipsnames]{pstricks}
\usepackage{epsfig}
\usepackage{rotating}
\usepackage{pst-plot}
\usepackage{pst-eps}
\usepackage{pst-grad}
\usepackage{pstricks-add}
\usepackage{lmodern}
\usepackage{xcolor}
\usepackage{graphicx}
\usepackage[top=2cm,bottom=2cm,left=2cm,right=2cm,a4paper]{geometry}

\usepackage{etex}
\usepackage[all]{xy}


\def\eps{{\varepsilon}}

\newcommand{\qq}{{\mathbb Q}}

\newcommand{\cc}{{\mathbb C}}
\newcommand{\zz}{{\mathbb Z}}

\newcommand{\pp}{{\mathbb P}}
\newcommand{\projgr}{{\mathbb G}}

\newcommand{\Pic}{\hbox{Pic}}

\newcommand{\sh}{\mathscr}
\newcommand{\cal}{\mathcal} 

\newcommand{\Mg}{\mathcal{M}_g}
\newcommand{\Mgbar}{\overline{\mathcal{M}}_g}

\newcommand{\Mgn}{\mathcal{M}_{g,n}}
\newcommand{\Mgnbar}{\overline{\mathcal{M}}_{g,n}}
\newcommand{\Fg}{\mathcal{F}_g}
\newcommand{\Hgmubar}{\overline{\mathcal{H}}_g(\mu)}

\newcommand{\Hgmu}{\mathcal{H}_g(\mu)}


\theoremstyle{plain}
\newtheorem{theorem}{Theorem}[section]
\newtheorem{lemma}[theorem]{Lemma}
\newtheorem{proposition}[theorem]{Proposition}
\newtheorem{corollary}[theorem]{Corollary}

\theoremstyle{definition}
\newtheorem{remark}[theorem]{Remark}
\newtheorem{definition}[theorem]{Definition}

\setlength{\oddsidemargin}{0.5cm} \setlength{\evensidemargin}{0.5cm}
\setlength{\textwidth}{15.5cm} \setlength{\topmargin}{-0.2cm}
\setlength{\textheight}{21.5cm}
\setlength{\parskip}{.5mm}

\usepackage[alphabetic]{amsrefs}

\title{Uniruledness of Strata of Holomorphic Differentials in Small Genus}
\author{Ignacio Barros}
\address{
Humboldt-Universit\"{a}t zu Berlin\\
Institut f\"{u}r Mathematik\\
Unter den Linden 6, 10099 - Berlin\\ 
Germany.} 
\email[]{iabarros@math.hu-berlin.de}

\begin{document}

\maketitle

\begin{abstract}
We address the question concerning the birational geometry of the strata of holomorphic and quadratic differentials. We show strata of holomorphic and quadratic differentials to be uniruled in small genus by constructing rational curves via pencils on K3 and del Pezzo surfaces respectively. Restricting to genus $3\leq g\leq6$, we construct projective bundles over a rational varieties that dominate the holomorphic strata with length at most $g-1$, hence showing in addition that these strata are unirational.
\end{abstract}


\nopagebreak



\section*{Introduction}

Let $g\geq2$ be an integer and $\mu=(m_1,\ldots,m_n)$ an integer partition of $2g-2$. The moduli space of canonical divisors of type $\mu$ is defined as the closed substack $\Hgmu\subset\Mgn$ given by 
$$\Hgmu=\left\{\left[C,x_1,\ldots,x_n\right]\in\Mgn\left|\hspace{0,1cm} \cal{O}_C\left(\sum_{i=1}^{n}m_ix_i\right)\cong\omega_C\right.\right\}.$$
A partition $\mu$ is referred to as \textit{holomorphic} if all entries are non-negative. To some extent Diaz in \cite{StevenDiazTangent}, and more precisely and in a greater generality Polischuck in \cite[Thm 1.1.a]{2003math......9217P}, proved that $\Hgmu$ is nonsingular and of pure codimension $g-1$ when $\mu$ is holomorphic. The same arguments apply if we allow the partition $\mu$ to have negative terms. In that case $\Hgmu$ is again nonsingular, but of pure codimension $g$ inside $\Mgn$. Kontsevich and Zorich \cite[Thm. 1 and 2]{2003InMat.153..631K} showed that for a holomorphic partition, $\Hgmu$ may have up to three connected or equivalently irreducible components depending on the parity of the entries and the length of the partition. For example, observe that if all entries of $\mu$ are even,
$$\eta=\cal{O}_{C}\left(\frac{1}{2}\sum m_ix_i\right)$$
is a theta characteristic and the parity of $h^0(C,\eta)$ is stable under deformations, cf. \cite{Mumf1971}. If $\mu=(2,\ldots,2)$, there is a dominant forgetful map
$$\begin{array}{rcl}
\Hgmu&\to&\sh{S}_g^-\coprod Z\\
\left[C,x_1,\ldots,x_{g-1}\right]&\mapsto&\left[C,x_1+\ldots+x_{g-1}\right]
\end{array}$$
where $\sh{S}_g^-$ is the space of odd theta characteristics and $Z\subset \sh{S}_{g}^{+}$ is the divisor of even theta characteristics with at least two sections. In this case our space breaks into at least two components.\\

A modular interpretation of the boundary $\partial\Hgmubar$ of the Zariski closure
$$\Hgmubar\subset\Mgnbar$$
has been the subject of much recent attention. Farkas and Pandharipande \cite{2015arXiv150807940F} defined a proper moduli space, called the moduli space of {\it{twisted canonical divisors}} which it contains our space of interest as an open subset. However, the question of which twisted canonical divisors in the boundary lie in $\Hgmubar$ requires more information than the dual graph of the curve and the relations in the Jacobian of each component. The boundary components are parametrized not just by the topological type of the nodal curves and strata in smaller genera, but also by the particular complex structure, manifesting as residue conditions at the nodes provided by Bainbridge, Chen, Gendron, Grushevsky and M\"{o}ller \cite{2016arXiv160408834B}.\\

The next natural question concerns the global geometry of the strata $\Hgmubar$. In this paper we establish a range for the genus and length of partition where the Kodaira dimension is negative. For holomorphic partitions, the moduli space of odd spin curves $\overline{\cal{S}}_g^{-}$ serves as a useful model highlighting the change in the birational type of these varieties from uniruled to general type and we follow the general strategy presented in \cite{2010arXiv1004.0278F}.\\

We describe our strategy. When the length of $\mu$ is greater than $g-1$, the forgetful map $\Hgmu\to\Mg$ is dominant and for small enough genus a canonically embedded smooth curve can be realized as a hyperplane section of a K3 surface $S\subset \pp^g$. Since the divisor $\sum m_ix_i$ is canonical, it is the intersection of a codimension $2$ plane with $S$. The rational curve in $\Hgmubar$ passing through a general point is induced by the pencil spanned by this plane in $\pp^g$.\\  

The genus $g=10$ case is especially delicate and we treat it by studying $1$-nodal models of geometric genus $10$ curves lying on a K3 surface. For generic choice of a pointed curve $[C,x_1,x_2]\in\mathcal{M}_{10,2}$, we prove that we can always find such models. We do this by deformation theory arguments that reduce the question of dominance of a moduli map to cohomology computations. \\  

For holomorphic partitions $\mu$ with $g\geq4$, Kontsevich and Zorich \cite[Thm. 1 and 2]{2003InMat.153..631K} classified the stratum having more than one connected (or irreducible) component:
\begin{itemize}
\item If $\mu=(2g-2)$ or $\mu=(2l,2l)$ with $l\geq1$, then $\Hgmu$ has three components; the hyperelliptic one $\cal{H}^{hyp}$ and the even and odd $\cal{H}^+$, $\cal{H}^{-}$ depending on the parity of the induced theta characteristic, i.e., $[C,x_1,\ldots,x_n]\in \cal{H}^+$ (resp. $\cal{H}^-$) if and only if $h^0\left(C,\cal{O}_C\left(\frac{1}{2}\sum m_ix_i\right)\right)\equiv 0$ (resp. $\equiv 1$) $\mod 2$. 
\item If $\mu=(2l_1,\ldots,2l_r)$ for $l_i\geq0$ and $r\geq3$, then $\Hgmu$ has two components $\cal{H}^+$ and $\cal{H}^-$, depending on the parity of the induced theta characteristic.
\item Finally, if $\mu=(2l-1,2l-1)$ for $l\geq 2$, then $\Hgmu$ has two components, the hyperelliptic one $\cal{H}^{hyp}$ and a complementary one $\cal{H}^{nonhyp}$. 
\end{itemize}
For any other partition the space is connected. For $g=3$, both $\mathcal{H}(2,2)$ and $\mathcal{H}(4)$ have two components; the hyperelliptic and the odd spin structure (or non-hyperelliptic) component. The strata are connected for other partitions.\\

Regarding the irreducible stratum we prove the following:
\begin{theorem}
\label{thm}
Let $\Hgmubar$ be an irreducible stratum with length of partition $l(\mu)$. The birational geometry of $\Hgmubar$ is summarized in the following table:
\begin{table}[H]
\centering
\begin{tabular}{|l|l|l|}
\hline
                   & Unirational & Uniruled \\ \hline
$3\leq g\leq 6$    & $l(\mu)\leq g-1$      & No restriction on $\mu$  \\ \hline
$g=7,8$            & \hspace{0.8cm}\hbox{?}        & No restriction on $\mu$ \\ \hline
$g=9$            & \hspace{0.8cm}\hbox{?}         & $l(\mu)\geq 7$         \\ \hline
$g=10$             & \hspace{0.8cm}\hbox{?}        & $11\leq l(\mu)< 18$         \\ \hline
$g=11$             & \hspace{0.8cm}\hbox{?}        & $l(\mu)\geq 10$         \\ \hline
\end{tabular}
\end{table}
\end{theorem}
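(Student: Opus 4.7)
The theorem splits into two independent tasks: establishing uniruledness in the ranges indicated by the table, and upgrading this to unirationality for $3\le g\le 6$ and $l(\mu)\le g-1$. I would follow the general strategy described in the introduction, separating the K3 pencil construction (which handles the uniruledness column) from an explicit projective bundle construction (which handles the unirationality column).

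For the uniruledness statements, the starting point is that a general canonically embedded curve $C\subset\pp^g$ lies on a K3 surface $S\subset\pp^g$ whenever $g\le 11$ and $g\ne 10$ (classical for $g\le 9$, due to Mukai for $g=11$). Pick a general $[C,x_1,\ldots,x_n]\in\Hgmu$ and realize $C$ on such an $S$. Since $\sum m_ix_i$ is a canonical divisor on $C$, it is cut out on $C$ by a codimension-$2$ linear subspace $\Lambda\subset\pp^g$, and the pencil of hyperplanes through $\Lambda$ sweeps out a one-parameter family of curves $C_t\subset S$, each of which passes through $x_1,\ldots,x_n$ and meets $\Lambda$ in a canonical divisor of type $\mu$. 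This produces a non-trivial rational curve $\pp^1\to\Hgmubar$ through the given point, and the length bounds in the table encode exactly the range in which these pencils are non-constant on moduli and in which the construction sweeps out an open subset of $\Hgmu$ as the moduli point varies.

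The $g=10$ case is the principal obstacle, since by Mukai the locus of canonical genus $10$ curves lying on a K3 surface is a divisor in $\mathcal{M}_{10}$, and the pencil argument as stated breaks down on the generic point. I would work around this by $1$-nodal models: replace a general $[C,x_1,x_2]\in\mathcal{M}_{10,2}$ by a $1$-nodal curve $\widetilde{C}$ of geometric genus $10$ that does embed into a K3 surface $S$. The existence of such a $\widetilde{C}$ for a general $[C,x_1,x_2]$ reduces via standard deformation theory to a cohomological vanishing for the normal sheaf $\mathcal{N}_{\widetilde{C}/S}$, which in turn is proved by dualising and using the K3 adjunction $\omega_{\widetilde{C}}\cong\mathcal{O}_S(\widetilde{C})\otimes\mathcal{O}_{\widetilde{C}}$. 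Once this vanishing is in place, the pencil construction of the previous paragraph applies verbatim to the family of smoothings of $\widetilde{C}$ inside $S$. The bounds $11\le l(\mu)<18$ arise from dimension counts in $\overline{\mathcal{M}}_{10,n}$ together with the need to avoid the fully simple stratum $\mu=(1,\ldots,1)$, where every marked point is forced to be a base point of the canonical system and the pencil collapses.

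For the unirationality assertions with $3\le g\le 6$ and $l(\mu)\le g-1$, the forgetful map $\Hgmu\to\Mg$ is no longer dominant, so instead I would construct a projective bundle $\sh{P}\to B$ over a rational variety $B$ together with a dominant rational map $\sh{P}\dashrightarrow\Hgmubar$. The base $B$ parametrises auxiliary geometric data (a K3 surface $S\subset\pp^g$ together with the codimension-$2$ subspace $\Lambda$ cutting out the marked canonical divisor), whose rationality is verified case by case by classical projective constructions in low genus; the fibres of $\sh{P}\to B$ parametrise the residual freedom in choosing the curve through $\Lambda\cap S$. Dominance of $\sh{P}\dashrightarrow\Hgmubar$ is a tangent-level surjectivity check, and a dimension count against $\dim\Hgmu=2g-2+l(\mu)$ pins down precisely the range $l(\mu)\le g-1$. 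I expect the deformation-theoretic existence of the $1$-nodal K3 models in the $g=10$ case to be the single hardest step of the argument; the other entries in the table follow a uniform template once the K3 input is in place.
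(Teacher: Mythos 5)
Your outline reproduces the paper's high-level strategy, but several of the table's entries rest on ingredients your plan does not supply. The pencil-of-hyperplanes construction as you state it only works when the general point of the stratum has a \emph{general} underlying curve, i.e. when $\Hgmu\to\Mg$ is dominant, which requires $l(\mu)\geq g-1$. The entries ``no restriction on $\mu$'' for $g\leq 8$ and ``$l(\mu)\geq 7$'' for $g=9$ are precisely the cases where this fails, and the paper closes them with two specific inputs that are absent from your proposal: for $g\leq 8$, Ide's theorem that \emph{every} smooth curve of genus at most $8$ embeds in a K3 surface as a big and nef class (so even the special curves appearing in short-length strata admit K3 extensions, after passing to the crepant resolution and checking the pencil stays nodal), and for $g=9$, $l(\mu)=7$, an argument that the image of the stratum in $\cal{M}_9$ is not contained in the pentagonal Brill--Noether divisor (Mukai's obstruction to lying on a K3), carried out via Mullane's class of the pushed-forward stratum and a slope comparison with $\overline{D}^1_5$. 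Saying that ``the length bounds encode exactly the range in which these pencils are non-constant and sweep out an open subset'' is a restatement of what must be proved, not a proof.

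The genus $10$ step is also not reducible to the vanishing you propose. A vanishing of $H^1$ of the (equisingular) normal sheaf of $\widetilde{C}\subset S$ with $S$ \emph{fixed} only gives smoothness and the expected dimension of the Severi variety; it does not show that a general $[C,x_1,x_2]\in\cal{M}_{10,2}$ admits a $1$-nodal model on some K3. What is actually needed is dominance of the moduli map $c:\cal{V}_{11,1}\to\cal{M}_{10,[2]}$, where the nodal curve of arithmetic genus $11$ and the genus-$11$ polarized K3 both vary; the cokernel of $dc$ is identified (following Flamini--Knutsen--Pacienza--Sernesi and Beauville) with $H^0(S,\Omega_S^1(X))$, which vanishes exactly because the polarization has genus $11$, whereas for genus-$10$ K3s it is one-dimensional -- this is Theorem \ref{thmDT} and is the whole content of the step; your ``dualise and use adjunction'' shortcut would equally ``prove'' the false statement for genus-$10$ K3s. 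Your explanation of the bound $l(\mu)<18$ is also off: for $\mu=(1,\dots,1)$ the pencil does not collapse; since $m_1=m_2=1$ the two preimages of the node are not in the support of the divisor $D$ being tracked, so the family only descends to the $\zz/2\zz$-quotient of the stratum, and the lower bound $l(\mu)\geq 11$ is the dimension count making $\cal{H}_{10}(\mu)\to\cal{M}_{10,[2]}$ dominant. Finally, for the unirationality column your proposed base, pairs $(S,\Lambda)$ of a K3 with a codimension-$2$ plane having $\mu$-contact, is not shown to be rational, and the contact locus inside the Grassmannian bundle over $\cal{F}_g$ is not obviously so; the paper avoids this entirely by using $(10-g)$-nodal plane sextic models, where fixing a cubic $E$ and the points makes the $\mu$-contact conditions \emph{linear} on sextics, yielding an honest projective bundle over the rational variety $\Sigma\subset|\cal{O}_{\pp^2}(3)|\times(\pp^2)^9$, with nonemptiness/dominance of the fibers checked via generalized Severi varieties (Caporaso--Harris). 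Without a rationality argument for your base, the unirationality entries remain unproved.
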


For those where irreducibility does not holds we prove the following:

\begin{theorem}
\label{non-irred}
For every genus $\cal{H}^{hyp}(2g-2)$ is unirational. Even and odd strata are uniruled for any partition in the range $g\leq 8$. For partition $\mu=(2,\ldots,2)$, the odd stratum $\cal{H}^{-}$ is uniruled in genus $g=9,11$ and the $S_{g-1}$-quotient of the even stratum $\cal{H}^{+}$ is uniruled for every genus.
\end{theorem}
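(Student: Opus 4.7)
My plan is to treat the four assertions of Theorem \ref{non-irred} separately, combining an explicit rational parametrization with the K3 pencil construction sketched before Theorem \ref{thm} and the existing uniruledness results for moduli of spin curves.

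For $\cal{H}^{hyp}(2g-2)$, I would use that every Weierstrass point $x$ on a hyperelliptic curve satisfies $2x\sim g^1_2$, so $(2g-2)x\sim(g-1)g^1_2\sim K_C$ automatically. Writing $C$ in the form $y^2=f(x)$ with $\deg f=2g+1$ and the marked point at infinity, the coefficients of $f$, modulo the natural affine action on $x$ and the rescaling of $y$, yield an explicit rational parametrization of the stratum, and in particular unirationality.

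For the even and odd strata with $g\leq 8$, I would repeat the K3 pencil construction sketched in the introduction. Given a general $[C,x_1,\ldots,x_n]\in\cal{H}^{\pm}(\mu)$ with $\mu=(2l_1,\ldots,2l_n)$, I realize $C$ as a hyperplane section of a K3 surface $S\subset\pp^g$, which is possible since the moduli of polarized K3 surfaces dominates $\Mg$ in this range. The divisor $\sum m_ix_i$, being canonical, is cut out on $C$ by a codimension two plane $\Lambda\subset\pp^g$ lying inside the hyperplane $H$ with $S\cap H=C$; a local computation confirms $\Lambda\cap S=\sum m_ix_i$ as subschemes of $S$. Sweeping out the pencil of hyperplanes $\{H_t\}$ through $\Lambda$ produces a family $C_t=S\cap H_t$ of canonical curves all containing $\sum m_ix_i$ with the prescribed multiplicities (since $\Lambda\subset H_t$ for every $t$). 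Because the parity of $h^0(C_t,\eta_t)$ with $\eta_t=\cal{O}_{C_t}(\sum l_ix_i)$ is locally constant in families by \cite{Mumf1971}, the resulting rational curve in $\Hgmubar$ stays within $\cal{H}^{\pm}$, giving the required ruling.

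For $\cal{H}^{-}(2,\ldots,2)$ with $g=9,11$, I note that a general odd theta characteristic satisfies $h^0(\eta)=1$, so $|\eta|$ consists of a single effective divisor. The forgetful map therefore identifies $\cal{H}^{-}(2,\ldots,2)/S_{g-1}$ birationally with $\sh{S}_g^{-}$, and the results of \cite{2010arXiv1004.0278F} yield uniruledness of $\sh{S}_g^{-}$ in these genera; since $\pp^1$ is simply connected, any finite \'etale cover of a rational curve is trivial, so the ruling pulls back through the generically \'etale quotient $\cal{H}^{-}(2,\ldots,2)\to\cal{H}^{-}(2,\ldots,2)/S_{g-1}$ to a ruling of $\cal{H}^{-}(2,\ldots,2)$ itself. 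For the $S_{g-1}$-quotient of $\cal{H}^{+}(2,\ldots,2)$, the forgetful map to the divisor $Z\subset\sh{S}_g^{+}$ of even theta characteristics with at least two sections is dominant, and over a general $(C,\eta)\in Z$ the fiber is the full linear system $|\eta|\cong\pp^1$. This exhibits the quotient as birationally a $\pp^1$-fibration over $Z$, and hence uniruled, independent of the birational type of $Z$.

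The main obstacle is to verify that the pencil produced in the K3 step really moves the curve in moduli, i.e.\ that the resulting rational map $\pp^1\dashrightarrow\Mgn$ is non-constant for a general choice of $(S,\Lambda)$. This is a genericity argument of the same flavor as in the proof of Theorem \ref{thm}, and the bound $g\leq 8$ appearing in the even/odd statement reflects the range in which both the K3 moduli is large enough to dominate every stratum in question and the parity of the induced theta characteristic can be controlled uniformly.
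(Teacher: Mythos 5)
Your hyperelliptic and even-quotient arguments are essentially the paper's (explicit rational parametrization by branch points/Weierstrass data, and the description of $\overline{\cal{H}}^{+}/S_{g-1}$ as a $\pp^1$-fibration over the divisor $Z\subset\sh{S}_g^{+}$, which implicitly uses Farkas' result that a general point of $Z$ has exactly two sections; for the hyperelliptic case you should also record the converse, namely that on a hyperelliptic curve a subcanonical point is necessarily a Weierstrass point, so that your family really dominates $\cal{H}^{hyp}(2g-2)$). The genuine gap is in your treatment of the odd stratum for $g=9,11$. You identify $\cal{H}^{-}(2,\ldots,2)/S_{g-1}$ birationally with $\sh{S}_g^{-}$, quote uniruledness of $\sh{S}_g^{-}$ from \cite{2010arXiv1004.0278F}, and then claim the ruling lifts because an \'etale cover of $\pp^1$ is trivial. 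But uniruledness does not ascend along a generically finite quotient: the map $\cal{H}^{-}(2,\ldots,2)\to\sh{S}_g^{-}$ is only \'etale (and even only defined, with nonempty fibers) over the open locus where the unique divisor in $|\eta|$ is reduced and the pointed curve has no extra automorphisms, and a rational curve through a general point of $\sh{S}_g^{-}$ --- in particular the pencils of \cite{2010arXiv1004.0278F} --- will in general meet the complement of that locus. Over the rational curve you therefore only get an \'etale cover of $\pp^1$ minus finitely many points, which is not simply connected, and the closure of the preimage may well be an irreducible curve of positive genus (compare: a surface of general type can be a double cover of $\pp^2$, whose quotient is rational). The paper avoids this entirely: for $\mu=(2,\ldots,2)$ one has $l(\mu)=g-1$, the odd component dominates $\Mg$, so the underlying curve at a general point of $\cal{H}^{-}$ is general in moduli and lies on a K3 surface for $g=9$ (non-pentagonal) and $g=11$ (Mukai); the pencil of hyperplanes through the codimension-two plane cutting out $2x_1+\ldots+2x_{g-1}$ then rules $\cal{H}^{-}$ directly, the pencil staying in the odd component because the relevant open subset of $\pp^1$ is connected (equivalently, by deformation invariance of the parity).

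There is a second, smaller but real, gap in your $g\leq 8$ step for arbitrary even partitions. You justify embedding $C$ into a K3 by saying the moduli of polarized K3 surfaces dominates $\Mg$; that only helps when the general point of the component has a general underlying curve, i.e.\ essentially when $l(\mu)\geq g-1$. For short partitions (for instance $\cal{H}^{\pm}(2g-2)$, whose image in $\Mg$ has large codimension) the curve underlying a general point of the component is not general in moduli, so dominance of $\cal{F}_g\to\Mg$ gives nothing. The input the paper uses is Ide's theorem that \emph{every} smooth curve of genus at most $8$ embeds in a K3 surface as a big and nef class (after a crepant resolution of a K3 with ADE singularities), together with non-hyperellipticity of the general member of the even/odd component so that the canonical map is an embedding. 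This, and not the range of Mukai dominance (which extends to $g=9,11$), is the actual source of the bound $g\leq 8$; your closing explanation of that bound should be corrected accordingly.
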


Of equal interest is the strata of $k$-differentials when $k\geq 2$. In Section \ref{quadratic} we treat the case $k=2$. Let $\nu=(n_1,\ldots,n_{l(\nu)})$ be a partition of $4g-4$ with length $l(\nu)$ and 
$$\cal{Q}(\nu)=\left\{\left[C,x_1,\ldots,x_{l(\nu)}\right]\in\cal{M}_{g,l(\nu)}\left|\hspace{0,1cm} \cal{O}_C\left(\sum_{i=1}^{l(\nu)}n_ix_i\right)\cong\omega_C^{\otimes 2}\right.\right\}$$
the strata of quadratic differentials. The space $\cal{Q}(\nu)$ is smooth and can have several connected components due to hyperellipticity and the parity of theta characteristics when every entry of $\nu$ is even, among others. For $g\geq 3$ and a holomorphic partition $\nu$, if at least one entry is odd, the space $\mathcal{Q}(\nu)$ is smooth, connected and of dimension $2g-3+l(\nu)$, with the only exception of $g=4$ and $\nu=(3,3,3,3)$, where the space has several connected components. See \cite[Thm. 3 and 4]{2002math.....10099L}, \cite{Veech1990}, \cite[Thm. 1.1]{johannesS} and \cite[Thm 7.1]{CM14}. In Section \ref{quadratic} we turn to del Pezzo surfaces to construct rational curves on $\cal{Q}(\nu)$, obtaining the following result:

\begin{theorem}
\label{quad.diff.}
For genus $3\leq g\leq 6$ and partition $\nu$ with at least one odd entry, different from $(3,3,3,3)$ and length $l(\nu)\geq g$, the moduli space of quadratic differentials $\cal{Q}(\nu)$ is uniruled.
\end{theorem}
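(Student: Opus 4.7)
My plan mimics the K3 surface construction from the holomorphic case described above, replacing K3 surfaces by del Pezzo surfaces and canonical divisors by bicanonical ones. For each $g\in\{3,4,5,6\}$, a general curve $C$ of genus $g$ embeds into a del Pezzo surface $S_g$ — namely $\pp^2$ for $g=3$, $\pp^1\times\pp^1$ for $g=4$, and a smooth del Pezzo of degree $g-1$ for $g=5,6$ — as a member of the linear systems $|\cal{O}(4)|$, $|\cal{O}(3,3)|$, and $|{-2K_{S_g}}|$ respectively. In each case there is a line bundle $L_g$ on $S_g$ (respectively $\cal{O}(2)$, $\cal{O}(2,2)$, and $-2K_{S_g}$) with $L_g|_C\cong\omega_C^{\otimes 2}$ by adjunction, so that bicanonical divisors on $C$ are cut out by sections of $L_g$.

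Fix a general point $[C,x_1,\ldots,x_{l(\nu)}]\in\cal{Q}(\nu)$. The condition $l(\nu)\geq g$ gives $\dim\cal{Q}(\nu)\geq\dim\Mg$, so the forgetful map $\cal{Q}(\nu)\to\Mg$ is dominant; hence $C$ is a general curve of genus $g$ admitting an embedding $C\hookrightarrow S_g$ as above. Let $Z=\sum n_ix_i$ denote the bicanonical divisor. The restriction exact sequence
$$0\to\cal{O}_{S_g}(L_g-C)\to\cal{O}_{S_g}(L_g)\to\omega_C^{\otimes 2}\to 0$$
together with the vanishing $H^1(S_g,\cal{O}_{S_g}(L_g-C))=0$ on del Pezzo surfaces yields a surjection $H^0(S_g,L_g)\twoheadrightarrow H^0(C,\omega_C^{\otimes 2})$; in particular, $Z=D_0\cap C$ for some $D_0\in|L_g|$.

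Next I consider the linear family
$$\cal{F}=\{C'\in|C|:C'\cap D_0=Z \text{ as a divisor on } D_0\}\subset|C|.$$
The analogous restriction sequence along $D_0$, combined with the identification $\cal{O}_{S_g}(C)|_{D_0}(-Z)\cong\cal{O}_{D_0}$ (so $h^0=1$), shows $\dim\cal{F}\geq 1$ and $C\in\cal{F}$. Choose a pencil $\{C'_t\}_{t\in\pp^1}\subset\cal{F}$ through $C$. For generic $t$, $C'_t$ is smooth and meets $D_0$ with multiplicity $n_i$ at $x_i$ (by a local computation showing that each $C'_t$ is tangent to $D_0$ to order $n_i$ at $x_i$), so that $D_0|_{C'_t}=\sum n_ix_i$ as a divisor on $C'_t$. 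Consequently
$$\cal{O}_{C'_t}(Z)=\cal{O}_{C'_t}(D_0)=L_g|_{C'_t}\cong\omega_{C'_t}^{\otimes 2},$$
placing $[C'_t,x_1,\ldots,x_{l(\nu)}]$ in $\cal{Q}(\nu)$ and producing a rational curve in $\cal{Q}(\nu)$ through the chosen general point.

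The main obstacle will be verifying that this map $\pp^1\to\cal{Q}(\nu)$ is non-constant for general starting data. Via Kodaira-Spencer, this amounts to showing that the infinitesimal deformation of $C$ inside $S_g$ along the pencil — which lives in $H^0(C,\cal{O}_C(C-Z))$ — has nonzero image in $H^1(C,T_C)$. I plan to verify this by a dimension count: the incidence space of triples $(S_g,C,Z)$ modulo $\Aut(S_g)$ dominates $\cal{Q}(\nu)$ in the range $l(\nu)\geq g$, so a generic pencil is transverse to the $\Aut(S_g)$-orbits and produces a genuinely moving family in moduli. The exclusion $\nu\neq(3,3,3,3)$ in genus $4$ is required because the stratum is reducible there, and the construction must be refined to ensure that the rational curve stays within a fixed component.
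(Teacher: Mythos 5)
Your construction is in essence the paper's: place the general curve on a del Pezzo surface, cut the bicanonical divisor $Z=\sum n_ix_i$ by a member $D_0$ of an auxiliary linear system restricting to $\omega_C^{\otimes 2}$, and move $C$ in a pencil inside $|C|$ that keeps the tangency divisor $Z$ fixed (in the paper this is literally the pencil of hyperplane sections of the $|-2K_S|$-embedded surface through the codimension-two span of $Z$). However, two steps you rely on are genuine gaps. First, ``$\dim\cal{Q}(\nu)\geq\dim\Mg$, so the forgetful map is dominant'' is a non sequitur: a dimension inequality never yields dominance, and in this very subject it can fail --- hyperelliptic-type components of strata have the expected dimension yet map into a proper subvariety of $\Mg$. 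The paper devotes a separate lemma to this point: using the cartesian diagram with the universal Jacobian, it shows that the weighted Abel--Jacobi map $(x_{m+1},\ldots,x_{m+g})\mapsto\cal{O}_C\left(\sum n_ix_i\right)$ hits $\omega_C^{\otimes 2}$ for a general curve, and then irreducibility of $\cal{Q}(\nu)$ (this is where ``at least one odd entry'' and $\nu\neq(3,3,3,3)$ really enter, not merely to ``stay in a component'') gives that the general point of the stratum has general underlying curve. Without this you cannot put the curve underlying a general point of $\cal{Q}(\nu)$ on your del Pezzo surface at all.

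Second, and more seriously, the non-constancy of $\pp^1\to\cal{Q}(\nu)$ --- which you yourself call the main obstacle --- is left unproved. ``A generic pencil is transverse to the $\Aut(S_g)$-orbits'' is not an argument: transversality to automorphism orbits inside your incidence space says nothing about whether the particular pencil through the given curve lies in a fibre of the moduli map, i.e.\ whether the family is isotrivial; moreover the claimed dominance of that incidence space over $\cal{Q}(\nu)$ again presupposes the unproved dominance statements. The paper settles exactly this point concretely: a topological Euler-characteristic count shows the pencil must acquire singular fibres (if all fibres were smooth one would get $\chi(\sh{U})=2(2-2g)<0$, while $\sh{U}$ is an iterated blow-up of $\pp^2$, so $\chi(\sh{U})\geq 3$), and the remaining danger that every singular fibre is just $C$ with a rational tail (so the moduli map could still be constant) is excluded by viewing the pencil as a family of nodal plane sextics, where a rational component must be a line or a conic and the residual curve drops genus; alternatively the isotriviality argument of the paper's Lemma 1.2 adapts, since a rational surface cannot be birational to $C\times\pp^1$ with $g(C)\geq3$. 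Your Kodaira--Spencer reformulation (a section of $\cal{O}_C(C-Z)$ with nonzero image in $H^1(C,T_C)$) is the right criterion, but the nonvanishing must actually be proved. Finally, your embedding claims for $g=5,6$ (general curve in $|-2K_S|$ on a degree $g-1$ del Pezzo) are precisely the content of the paper's proposition via $(10-g)$-nodal plane sextic models and should be justified rather than asserted.
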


\subsection*{Nodal Curves on $K3$ Surfaces.}

A \textit{K3 surface} $S$ is a smooth connected complex projective surface with $K_S\cong\cal{O}_S$ and $h^1(S,\cal{O}_S)=0$. A \textit{polarized} K3 surface of genus $g$ is a pair $(S,H)$ where $H$ is a primitive, nef line bundle on $S$, $|H|$ has no fixed components and $H^2=2g-2$ (hence big for $g\geq2$). Such pairs form a smooth moduli space denoted $\cal{F}_g$. Given such a pair $(S,H)\in\cal{F}_g$, for $g\geq2$ the linear system $|H|$ is base point free of dimension $g$ and if it does not contain hyperelliptic curves, the induced map 
$$\psi_{|H|}:S\to\pp^g$$  
is birational onto the image, see e.g. \cite[Thm. 8.3]{gralK3} for $H$ ample and \cite[Lec. 9]{ClKo88} or \cite[Thm. 7.32]{De01} for $H$ big and nef. The image of $\psi_{|H|}$ is a degree $2g-2$ normal surface on $\pp^g$, with at worst canonical singularities and general hyperplane sections are canonical curves of genus $g$. \\

Given a positive integer $\delta\leq g$, the \textit{Severi variety} of irreducible $\delta$-nodal curves in the linear system $|H|$ is denoted by $V_{\delta}(S,H)$. It is well-known that for $(S,H)\in\cal{F}_g$ general and $g\geq 2$, $V_\delta(S,H)$ is non-empty and each irreducible component is smooth of dimension $g-\delta$. We refer to \cite{Chen99}, \cite[Cor. 1.2]{Chen16}, \cite{nodalK3} and \cite{2000math......4130F} for fundamental facts on this matter.

\begin{definition}
For integers $g$ and $\delta$, with $g\geq3$ and $0\leq\delta\leq g-2$, we define the \textit{universal Severi variety} $\cal{V}_{g,\delta}$ to be the algebraic stack of pairs $(S,X)$ with $(S,H)\in\cal{F}_g$ and $X\in V_\delta(S,H)$. 
\end{definition}

The stack $\cal{V}_{g,\delta}$ is smooth and every irreducible component has dimension $19+(g-\delta)$. It was conjectured by C. Ciliberto and T. Dedieu \cite[Thm. 2.1]{CilDed2012} that $\cal{V}_{g,\delta}$ is irreducible and this was proved in the range $3\leq g\leq 11$, $g\neq 10$ and $0\leq\delta\leq g$. The natural forgetful map 
$$\pi_\delta:\cal{V}_{g,\delta}\to\cal{F}_g$$
is smooth and when restricted to any irreducible component is dominant; see \cite{2007arXiv0707.0157F}.\\

There is a well-defined moduli map 
$$c_{g,\delta}:\cal{V}_{g,\delta}\to\cal{M}_{g-\delta}$$
sending a pair $(S,X)$ to the isomorphism class of the normalization of $X$. When $\delta=0$ this is Mukai's map: dominant for $g\leq 9$ and $g=11$, not dominant for $g=10$ and generically finite over the image for $g\geq 11$ and $g\neq 12$. See \cite{2MukaiK3}, \cite{1MukaiK3}, \cite{4MukaiK3} and \cite{MoMu83}. Moreover, for $g=11$ and $g\geq 13$, Mukai's map is birational over the image, cf. \cite[Thm. 4.5]{CLM93}. \\

Beauville in \cite[\S 5]{2002math.....11313B} studied the differential of the map $c_{g,0}$ and gave a deformation theoretic proof of Mukai's results. Flamini, Knutsen, Pacienza and Sernesi proved in \cite[Thm. 1.1]{2007arXiv0707.0157F} that in the range $3\leq g\leq 11$ and $0\leq \delta\leq g-2$, if $V\subset \cal{V}_{g,\delta}$ is any irreducible component, then for $2\leq g-\delta<g\leq 11$ the restricted map 
$$c_{g,\delta}\mid_V:V\to \cal{M}_{g-\delta}$$
is dominant with general fiber of dimension $22-2(g-\delta)$. This result was generalized in \cite[Thm. 1.1]{Ke15} and \cite[Thm. 1.1]{CFGK17} for nodal curves in the linear system $|kH|$. They proved dominance in low genus and generic finiteness for $g$ high enough. \\

Let $(S,X)\in\cal{V}_{g,\delta}$ be a $\delta$-nodal curve on a K3 and $\nu:C\to X$ the normalization map with $\nu^{-1}(\hbox{Sing}(X))=\{p_1,q_1,\ldots,p_\delta,q_\delta\}$. We can consider the following moduli map
$$\begin{array}{rcl}
c:\cal{V}_{g,\delta}&\to&\cal{M}_{g-\delta,[2\delta]}\\
(S,X)&\mapsto&\left[C,p_1+q_1+\ldots+p_\delta+q_\delta\right].
\end{array}$$

The following theorem is the main ingredient to treat the genus $10$ case. Following \cite{2007arXiv0707.0157F} we were able to extend their result.

\begin{theorem}
\label{thmDT}
In the range $3\leq g\leq 11$, $1\leq \delta\leq g-2$ and $g\neq 10$, the moduli map
$$c:\cal{V}_{g,\delta}\to\cal{M}_{g-\delta,[2\delta]}$$
is dominant with general fiber dimension $22-2g$. Moreover, when $g=10$, the image of $c$ has always codimension one and the class is given by
$$c_*\left[\cal{V}_{10,\delta}\right]=\frac{1}{2\delta!}\left(7\lambda+\psi\right)\in \mathrm{Pic}_{\qq}(\cal{M}_{10-\delta,[2\delta]}).$$
\end{theorem}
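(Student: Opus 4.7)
The plan is to extend to the marked setting the tangent-space argument of Flamini, Knutsen, Pacienza and Sernesi \cite{2007arXiv0707.0157F}, which establishes the analogue for the unmarked composition $c_{g,\delta}:\cal{V}_{g,\delta}\to\cal{M}_{g-\delta}$. Fix a general $\xi=(S,X)\in\cal{V}_{g,\delta}$, let $\nu:C\to X$ be the normalization, and write $D=p_1+q_1+\cdots+p_\delta+q_\delta$. Since $\nu$ is an immersion on each branch and $\det T_S\cong\mathcal{O}_S$, one has $N_\nu:=\nu^*T_S/T_C\cong\omega_C$, and the sequence
\[
0\to T_C\to\nu^*T_S\to\omega_C\to 0
\]
is the basic input. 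The first step is to identify the differential
\[
dc_\xi:T_\xi\cal{V}_{g,\delta}\to H^1\bigl(C,T_C(-D)\bigr).
\]
Using the deformation theory of the triple $\nu:(C,D)\to S$ (compare Sernesi and \cite{2007arXiv0707.0157F}) one assembles a commutative diagram relating the $-D$-twist of the above sequence with $H^1(S,T_S)$, and a diagram chase shows that $\operatorname{coker}(dc_\xi)$ injects into a subquotient of $H^1(C,\nu^*T_S)$. The $2\delta$ extra dimensions of the target $H^1(C,T_C(-D))$ with respect to $H^1(C,T_C)$ correspond to moving the marked points on $C$; on the source they come from moving the nodes of $X$ inside $S$, already present in the equigeneric deformations parametrised by $H^0(C,\bar N_\nu)$.

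For $3\leq g\leq 11$ with $g\neq 10$ and $1\leq\delta\leq g-2$, the cohomology computations on the general polarized K3 carried out in \cite{2007arXiv0707.0157F} — chiefly the vanishing of $H^1(S,T_S(-X))$ and the Mukai-regularity of $(S,H)$ in these genera — carry through after twisting by $-D$ on $C$, and yield surjectivity of $dc_\xi$. Upper semicontinuity promotes this to dominance of $c|_V$ on every irreducible component $V\subset\cal{V}_{g,\delta}$, and the dimension count
\[
\dim\cal{V}_{g,\delta}-\dim\cal{M}_{g-\delta,[2\delta]}=(19+g-\delta)-(3g-\delta-3)=22-2g
\]
gives the generic fibre dimension.

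For $g=10$, the same machinery produces a one-dimensional cokernel (the Mukai obstruction, already responsible for the codimension-one image of $c_{10,0}$ in $\cal{M}_{10}$), so $\mathrm{im}(c)$ is a divisor in $\cal{M}_{10-\delta,[2\delta]}$. To identify its class I would compare with the Mukai divisor $\overline{\cal{K}}_{10}\subset\overline{\cal{M}}_{10}$ via the clutching rational map
\[
\kappa:\cal{M}_{10-\delta,[2\delta]}\dashrightarrow\overline{\cal{M}}_{10}
\]
obtained by pairing the $2\delta$ unordered marked points into $\delta$ nodes (summed over the $(2\delta)!/(2^\delta\delta!)$ pairings). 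Combining the Farkas-Popa expression for $[\overline{\cal{K}}_{10}]$ (whose $\lambda$-coefficient is $7$) with the standard pullback identities $\kappa^*\lambda=\lambda$ and $\kappa^*\delta_0=-\psi$ along the gluing locus, one reads off the combination $7\lambda+\psi$; the prefactor $\tfrac{1}{2\delta!}$ records the pushforward multiplicity due to the symmetries of the unordered marked points and the pairings. A test-curve verification — moving one marked point along a fixed general $C$, and using a Lefschetz pencil of K3 hyperplane sections — then confirms the coefficients.

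The main technical obstacle I anticipate is the first step: carrying the twist by $-D$ through the FKPS deformation diagram, so as to show that the marked-point data does not contribute any extra cokernel beyond the $H^1(C,\nu^*T_S)$ already controlled in the unmarked case. Once this is in place, the $g=10$ class identity reduces to a standard pullback-and-test-curve computation.
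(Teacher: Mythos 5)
Your route is the same as the paper's (extend the Flamini--Knutsen--Pacienza--Sernesi deformation machinery to the marked map, kill the cokernel of the differential by a cohomology computation on the K3 side, and for $g=10$ obtain the class by pulling back the Farkas--Popa class $\overline{\cal{K}}$ under the gluing map and pushing down by the $S_{2\delta}$-quotient). But the step you defer as "the main technical obstacle" -- carrying the twist through the FKPS diagram -- is not a routine verification: it is the entire content of the proof, and the heuristic you propose for it is wrong in the one case where the answer changes. The paper does not twist by $-D$ on $C$; it works on the blow-up $\eps:\tilde{S}\to S$ at the nodes with the FKPS sheaf $\sh{F}_C$ and twists by $-E$. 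It first proves $\sh{F}_C\mid_E\cong\cal{O}_E(-1)^{\oplus 2}$ (Proposition \ref{prop1}), so that $H^i(\tilde{S},\sh{F}_C(-E))\cong H^i(\tilde{S},\sh{F}_C)\cong H^i(S,T_S\langle X\rangle)$, then identifies $H^1$ of $\sh{F}_C(-E)\to T_C(-D)$ with the differential of $c$ (Proposition \ref{main-prop}), and concludes
$$\mathrm{coker}\, dc\cong H^2\bigl(\tilde{S},\eps^*T_S(-C-E)\bigr)\cong H^0\bigl(S,\Omega^1_S(X)\bigr),$$
using Serre duality, $\omega_{\tilde S}=\cal{O}_{\tilde S}(E)$ and $\eps^*X=C+2E$. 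By Beauville this is $0$ for $g\leq 9$ or $g=11$ and $\cc$ for $g=10$. Your claimed receptacle is also off: the relevant vanishing is not $H^1(S,T_S(-X))$, and the cokernel does not sit inside a subquotient of $H^1(C,\nu^*T_S)$; dually it embeds in $H^0(C,\nu^*\Omega^1_S(X))\cong H^1(C,\nu^*T_S(-D))^{\vee}$, i.e.\ only the $D$-twisted group sees it.

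More seriously, your guiding principle that "the marked-point data does not contribute any extra cokernel beyond the unmarked case" would lead to the wrong theorem. For $g=10$ and $\delta\geq 1$ the unmarked map $c_{10,\delta}:\cal{V}_{10,\delta}\to\cal{M}_{10-\delta}$ \emph{is} dominant (FKPS): there the obstruction is only the subspace of $H^0(S,\Omega^1_S(X))$ cut out by vanishing at the nodes, which is zero for general $(S,X)$. It is precisely the marked points (equivalently, the extra twist by $-E$) that enlarge the obstruction to the full one-dimensional $H^0(S,\Omega^1_S(X))$ and make the image of $c$ a divisor in $\cal{M}_{10-\delta,[2\delta]}$. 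Taken literally, your reduction to the unmarked case would yield dominance also for $g=10$, $\delta\geq 1$, contradicting the second half of the statement. Your plan for the $g=10$ class ($\xi^*\lambda=\lambda$, the $-\psi$ contribution of $\delta_{irr}$ under gluing, division by the symmetrization factor) matches the paper and is fine once the divisoriality and the identification of the image with the pulled-back K3 locus are in place; the test-curve check is unnecessary but harmless.
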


Here the class $\psi$ is defined to be the push forward of the $S_n$-invariant cycle $\psi_1+\ldots+\psi_{2\delta}$ by the finite quotient map $\cal{M}_{g-\delta,2\delta}\to\cal{M}_{g-\delta,[2\delta]}$. 

Farkas and Popa \cite[Thm 1.6]{FarkasPopa} proved that the class of the closure in $\overline{\cal{M}}_{10}$ of the locus of smooth curves lying on a $K3$ surface is given by
$$\overline{\cal{K}}=7\lambda-\delta_0-5\delta_1-9\delta_2-12\delta_3-14\delta_4-B_5\delta_5\in\mathrm{Pic}_\qq\left(\overline{\mathcal{M}}_{10}\right),$$
with $B_5\geq 6$. The formula for $c_*[\cal{V}_{10,\delta}]$ is a pull back of $\overline{\cal{K}}$ by a boundary morphism. See \cite[Chapter 17, Lemma 4.35]{ACG2}.\\

The paper consists of an introduction and four sections. In the first section we give a full proof of the second column concerning uniruledness in Theorem \ref{thm}. We start in \S1.1 by treating the cases $3\leq g\leq 9$ and $g=11$, when the length of the partition is at least $g-1$. In \S1.2 we show uniruledness in the range $3\leq g \leq 8$ for partitions of any length and in \S1.3 we establish uniruledness in genus $g=9$ and partitions of length $l(\mu)\geq 7$. In \S1.4 we prove that uniruledness in $g=10$ with partitions of length $11\leq l(\mu)<18$ follows from Theorem \ref{thmDT} and in \S1.5 we give a full proof of Theorem \ref{thmDT}. In the second section we deal with quadratic differentials and show Theorem \ref{quad.diff.}. In the third section we establish the first column concerning unirationality in Theorem \ref{thm}. Finally, in the last section we prove Theorem \ref{non-irred}, concerning the non-irreducible cases.

\section*{Acknowledgements}
This is part of my ongoing PhD under the supervision of Gavril Farkas and Rahul Pandharipande. I can not thank enough their infinite patience, permanent support and vast insight and experience. Thanks also go to Andreas Knutsen for getting involved in some of the questions, to Scott Mullane for many helpful comments and suggestions and to the anonymous referee for a careful reading and many insightful suggestions that helped to substantially improve the exposition. Quentin Gendron was kind enough to communicate me a missing reference. Finally, I would like to thank my colleagues, especially Daniele Agostini. My PhD studies are generously supported by the Einstein Stiftung Berlin.


\section{Uniruled Strata}
 
As explained in the introduction, the general strategy is to construct pencils on K3 surfaces to prove uniruledness. 

\begin{lemma}
Let $(S,H)\in\cal{F}_g$ be a general polarized $K3$ of genus $g\geq 2$ and $P\subset|H|$ a general pencil whose general element is a smooth curve of genus $g$. Then the induced rational map $P\dashrightarrow \Mg$ is non trivial. 
\end{lemma}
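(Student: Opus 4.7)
The plan is to argue by contradiction. Suppose the induced map $P \dashrightarrow \Mg$ is constant, so that every smooth member of the pencil is abstractly isomorphic to a fixed curve $C_0$ of genus $g \ge 2$. Then the restriction of the family $\mathcal{X} \to P$ to the smooth locus $P^{sm} \subset P$ is an isotrivial family of genus $g$ curves, meaning that after pulling back along a suitable finite \'etale cover of $P^{sm}$ the family becomes trivial. In particular the monodromy representation $\pi_1(P^{sm}) \to \Aut(C_0)$ factors through the finite group $\Aut(C_0)$ (finite because $g \ge 2$), so every local monodromy around a puncture has finite order.

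The contradiction I would extract comes from Picard--Lefschetz. For general $(S,H) \in \Fg$ and a general pencil $P \subset |H|$, the discriminant locus in $|H|$ is a non-empty divisor --- indeed the Severi variety $V_1(S,H)$ of $1$-nodal curves is non-empty of dimension $g-1 \ge 1$ by the facts recalled in the introduction --- and $P$ meets this divisor transversally in finitely many points, each parametrizing an irreducible $1$-nodal curve. The local monodromy around such a puncture is then a Dehn twist along the (non-separating) vanishing cycle on a nearby smooth fiber, and for $g \ge 2$ a Dehn twist has infinite order in the mapping class group of $C_0$. This is incompatible with the finite-order conclusion of the previous paragraph, completing the argument.

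The main point to verify carefully is that the singular members of a general pencil are genuinely $1$-nodal, so that the simplest form of Picard--Lefschetz applies and produces a Dehn twist as local monodromy; this is the standard input from the theory of Severi varieties on K3 surfaces quoted in the introduction, together with the transversality statement for a general line $P \subset |H|$ meeting the discriminant. One must also use in an essential way the passage from ``isotriviality of smooth curves'' to ``monodromy in the holomorphic automorphism group'' rather than in the full mapping class group; this is precisely where the finiteness of $\Aut(C_0)$ for $g \ge 2$ is used.
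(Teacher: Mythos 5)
Your proof is correct, and it rests on the same geometric input as the paper's: for general $(S,H)$ the Severi variety $V_1(S,H)$ is a non-empty divisor in $|H|$, so a general pencil contains irreducible $1$-nodal members. The difference is how the contradiction with constancy is extracted. The paper concludes directly: since every member of a general pencil is irreducible and at worst nodal, the pencil is a family of stable curves and induces a morphism $P\to\Mgbar$; a $1$-nodal fiber goes to the boundary while smooth fibers go to $\Mg$, so the map cannot be constant on the dense open locus of smooth fibers. You instead rule out isotriviality by monodromy: finiteness of $\Aut(C_0)$ for $g\geq 2$ forces the monodromy image to be finite (strictly speaking the monodromy lands in the mapping class group, and isotriviality makes its image finite after the \'etale trivializing cover --- your phrasing ``$\pi_1\to\Aut(C_0)$'' is slightly loose but harmless), while Picard--Lefschetz around a $1$-nodal fiber gives a Dehn twist about a non-separating vanishing cycle (non-separating because the fiber is irreducible), which already acts on $H_1$ by an infinite-order transvection, so no appeal to infinite order in the full mapping class group is needed. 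The checks you flag are indeed available for a general pencil on a general K3: the nodes of singular members avoid the base locus, so the total space is smooth there and the local model is $xy=t$; and since $\dim V_\delta(S,H)=g-\delta$, a general line in $|H|\cong\pp^g$ meets only $V_1$, so each singular fiber has exactly one node. Your route costs more machinery than the paper's one-line conclusion, but it is valid and topologically self-contained; note, however, that like the paper's proof of this lemma it uses generality of the pencil (to control the singular fibers), so it does not replace the paper's Lemma \ref{nontrivial}, where arbitrary pencils are handled instead via the Matsusaka--Mumford theorem, simple connectedness of the blown-up surface, and the fact that a K3 is not ruled.
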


\begin{proof}
As mentioned in the introduction, for general $(S,H)\in\Fg$ and $\delta\leq g$, the Severi variety of $\delta$-nodal curves 
$$V_\delta(S,H)=\left\{C\in|H|\mid C\hbox{ is $\delta$-nodal and irreducible}\right\}$$
is non-empty, regular and of codimension $g$ in $|H|$. Choosing the pencil $P$ to be general, it will intersect $V_1(S,H)$ non-trivially. Thus, the induced map to $\Mg$ cannot be trivial.   
\end{proof}
We need a slight refinement of the previous lemma.
\begin{lemma}
\label{nontrivial}
Let $(S,H)\in\cal{F}_g$ be a general polarized $K3$ of genus $g\geq 2$ and $P\subset|H|$ any pencil whose general element is a smooth curve of genus $g$. Then the induced rational map $P\dashrightarrow \Mg$ is non trivial.
\end{lemma}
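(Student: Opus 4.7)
My plan is to argue by contradiction, refining the previous lemma's proof to observe that its real content requires only that $P$ not be contained in the Severi divisor, a condition forced by the smoothness hypothesis on the general element of $P$.

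Concretely, I would first recall from the introduction that for general $(S,H) \in \Fg$ the Severi variety $V_1(S,H) \subset |H|$ is a nonempty hypersurface (pure codimension one). Since the general element of $P$ is smooth of genus $g$, in particular it is not $1$-nodal, so $P$ is not contained in $V_1(S,H)$. Viewing $P \cong \pp^1$ as a line in $|H| \cong \pp^g$, Bezout's theorem then guarantees that $P \cap V_1(S,H)$ is a nonempty finite set: pick any $X_0 \in P \cap V_1(S,H)$. This $X_0$ is an irreducible $1$-nodal curve of arithmetic genus $g \geq 2$, hence a stable curve representing a point $[X_0] \in \partial \Mgbar$. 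This Bezout step is the only truly new ingredient over the preceding lemma.

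Next I would upgrade the rational map $\Phi : P \dashrightarrow \Mg$ using properness. Since $\Mgbar$ is proper and $P \cong \pp^1$ is a smooth curve, $\Phi$ extends uniquely to a morphism $\overline\Phi : P \to \Mgbar$, whose value at each $t \in P$ is the stable limit of the family $\tilde S \to P$ (where $\tilde S$ is the blowup of $S$ along the base locus of $P$) at $t$. Suppose for contradiction that $\Phi$ is trivial, i.e.\ constant with image $\{[C]\} \subset \Mg$. Because $\overline \Phi$ agrees with $\Phi$ on the dense open locus of smooth fibers and $\Mgbar$ is separated, $\overline \Phi$ is itself constant with image $\{[C]\}$.

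Finally I would evaluate $\overline \Phi$ at the fiber $X_0$. On one hand, $\overline \Phi(X_0) = [X_0] \in \partial\Mgbar$ because $X_0$ is already stable and singular; on the other hand $\overline \Phi(X_0) = [C] \in \Mg$ by constancy. Since $\Mg$ and $\partial \Mgbar$ are disjoint, this is a contradiction, forcing $\Phi : P \dashrightarrow \Mg$ to be non-trivial. The substantive step is the Bezout observation ruling out $P \subset V_1(S,H)$; everything else is a standard application of the properness and separation of the Deligne--Mumford compactification.
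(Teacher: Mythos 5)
There is a genuine gap, and it sits exactly at the step you call the ``only truly new ingredient.'' The Severi variety $V_1(S,H)$ is a \emph{locally closed} subset of $|H|$; what Bezout (or projective dimension theory) gives you is that the line $P$ meets its \emph{closure}, i.e.\ the discriminant hypersurface of singular members. For a \emph{general} pencil one can indeed arrange that the intersection happens in the open dense locus of irreducible $1$-nodal curves -- but that is precisely the content of the preceding, unnumbered lemma. Lemma \ref{nontrivial} is about an \emph{arbitrary} pencil with smooth general member, and there the intersection points $P\cap\overline{V_1(S,H)}$ may well all lie in $\overline{V_1}\setminus V_1$: members with cusps, tacnodes, or worse (for a very general $(S,H)$ they are at least integral, but nothing forces them to be nodal). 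So you cannot ``pick $X_0\in P\cap V_1(S,H)$'' and assert it is an irreducible $1$-nodal, hence stable, curve.

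This is not a removable technicality, because your final step genuinely needs a \emph{stable} singular fiber: only then does separatedness force $\overline\Phi(t_0)=[X_0]\in\partial\Mgbar$. If the singular member is, say, cuspidal, the stable limit of the family at that point can perfectly well be a \emph{smooth} curve, so the mere existence of singular members is compatible with the moduli map being constant; isotrivial families with singular fibers exist (the constant-$j$ cuspidal degeneration $y^2=x^3+t^6$ illustrates the phenomenon), so ``$P$ meets the discriminant'' does not imply ``$P\dashrightarrow\Mg$ non-trivial.'' The paper's proof is built to avoid exactly this: it assumes the map is trivial, concludes the family is isotrivial, invokes Matsusaka--Mumford to trivialize it after an \'etale base change $B\to P$, uses simple connectedness of the blown-up K3 to force $B\cong P$, and concludes that $S$ would be birational to $C\times\pp^1$, contradicting the fact that a K3 surface is not ruled. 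Your properness/separatedness bookkeeping would be fine \emph{if} a stable singular member were available, but producing one for an arbitrary pencil is the real difficulty, and your Bezout step does not do it.
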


\begin{proof}
By contradiction we assume the induced rational map to $\Mg$ is trivial. We blow up $S$ at the base locus of $P$:
\begin{displaymath}
\xymatrix{
\sh{U}\ar[r]^{\hbox{Bl}_ZS}\ar[d]_{\pi}&S\ar@{-->}[dl]\\
P.&
}
\end{displaymath}
Since the automorphism group of $\sh{U}$ is finite, up to an \'{etale} base change $B\to P$, 
$$\tilde{\sh{U}}:=\sh{U}\times_{P}B$$
is birational to $C\times B$, cf. \cite[Thm. 2]{MaMu64}. Notice that $\sh{U}$ is simply connected, therefore, $\tilde{\sh{U}}\cong \sh{U}$ and $B\cong\pp^1$. It follows that $S$ is birational to $C\times \pp^1$, but $S$ is not ruled.
\end{proof}

Moreover, for $(S,H)\in\cal{F}_g$ general, every curve on a general pencil $P\subset |H|$ is irreducible and at worst nodal. The base locus of $P$ consist of $2g-2$ points. If we resolve the map $S\dashrightarrow P$ by blowing up, we obtain a family $\sh{U}$ over $P$ as in the proof of the lemma above. The general fiber $F$ is a smooth genus $g$ curve. From the relation
$$\chi(\sh{U},\zz)=\chi(P,\zz)\cdot\chi(F,\zz)+\hbox{number of singular fibers}$$
one deduces that on $\Mgbar$, $P\cdot\delta_{irr}=6g+18$, where $\delta_{irr}$ is the divisor on $\Mgbar$ whose general element correspond to a one-nodal irreducible curve. In other words, the hypersurface parametrizing singular curves in the linear system $D_{S,H}\subset|H|$ has degree $6g+18$.

\subsection{General Case, $g\leq 11$ and $g\neq 10$}
\label{main_argument}

Recall that a variety $X$ is uniruled if for a general point $p\in X$ there is a rational curve passing through it. In other words if there exists a variety $Y$ and a dominant rational map $Y\times \pp^1\dashrightarrow X$.
Let $\mu=(m_1,\ldots,m_n)$ be an holomorphic partition of $2g-2$ with length $n$ and 
$$\left[C,x_1,\ldots,x_n\right]\in\Hgmu$$
a point on the stratum. Assume $3\leq g\leq 9$ or $g=11$. The forgetful map $\pi:\Hgmu\to\Mg$ is dominant when the length of the partition is greater than or equal to $g-1$; see \cite{2015arXiv150303338G}. The curve $C$ is general and, therefore, can be embedded as a hyperplane section on a genus $g$ polarized K3 surface $(S,H)\in\cal{F}_g$. See \cite{2MukaiK3} and \cite{1MukaiK3}. Here $\cal{F}_g$ is the moduli space of polarized K3 surfaces $(S,H)$, where $S$ is a K3 surface and $H\in \Pic(S)$ is a (primitive) polarization of degree $H^2=2g-2$. We construct a rational curve 
$$\pp^1\to \Hgmubar$$
passing through $[C,x_1,\ldots,x_n]$. Our curve is embedded in $S$ as hyperplane section
$$C\cong S\cap H\hookrightarrow S\subset \pp H^0(S,H)^{\vee}\cong \pp^g.$$
The divisor $m_1x_1+\ldots+m_nx_n\in \hbox{Div}(C)$ is canonical so can be realized as a hyperplane section of $H\cong\pp^{g-1}$, i.e., a point $\Lambda_\mu\in \projgr(g-2,\pp^g)$ such that 
$$\Lambda_\mu\cdot S=m_1x_1+\ldots+m_nx_n.$$ 
Let 
$$P\cong\{H'\in \left(\pp^g\right)^{\vee}\mid \Lambda_\mu\subset H'\}$$
be the pencil of hyperplanes containing $\Lambda_\mu$ in $\pp^g$. Since $C\in P$ is smooth, for a general hyperplane $H'\in X$, the curve $C'=H'\cap S\hookrightarrow H'\cong \pp^{g-1}$ is smooth and canonically embedded. Moreover, the hyperplane $\Lambda_\mu\subset \pp^{g-1}$ is a canonical divisor of the form 
$$\Lambda_\mu\cdot S=\Lambda_\mu\cdot C=m_1x_1+\ldots+m_nx_n.$$ 
This construction gives us a map defined on an open subset of $P\cong \pp^1$

$$\begin{array}{rcl}
\gamma: \pp^1&\dashrightarrow&\Hgmubar\\
H'&\mapsto&\left[H'\cap S,x_1,\ldots,x_n\right].
\end{array}$$
The map can be extended and we already proved in Lemma \ref{nontrivial} that it cannot be trivial. This gives us Theorem \ref{thm} for $g\leq 9$ and $g=11$ when the length of $\mu$ is at least $g-1$.

\subsection{Special Cases for Genus $g\leq 8$.} 

Let $C$ be a smooth curve of genus $g\geq 2$. We recall a result of M. Ide: 

\begin{theorem}[\cite{Ide}]
Every smooth curve $C$ of genus $2\leq g\leq 8$ can be embedded in a smooth $K3$ surface $S$ with $C\subset S$ big and nef. 
\end{theorem}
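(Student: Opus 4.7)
My plan is to split the proof by genus $g \in \{2, 3, \ldots, 8\}$, and within each genus further by the gonality (or general/special position) of $C$, since the projective model used to locate a K3 surface through $C$ depends on this data. The guiding principle is that $S$ containing $C$ should arise either as a linear section of a Fano $3$-fold containing the canonical model of $C$, or as the relative anticanonical surface of a Hirzebruch surface on which $C$ sits as a divisor.

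First I would treat the ``generic'' stratum of $\mathcal{M}_g$ in each genus using classical Mukai-type projective models. For $g=3$ non-hyperelliptic, any smooth quartic $S \subset \mathbb{P}^3$ passing through a plane quartic $C$ works. For $g=4, 5$, the canonical curve is a complete intersection of type $(2,3)$ or $(2,2,2)$, and lifting the defining equations by one variable produces a K3 in $\mathbb{P}^4$ or $\mathbb{P}^5$ with $C$ as a hyperplane section. For $g=6, 7, 8$, Mukai realizes a generic canonical curve as a linear section of a homogeneous Fano threefold (respectively obtained from $G(2,5)$, the spinor tenfold $\mathrm{OG}(5,10)$, and $G(2,6)$), and an intermediate linear section cuts out a K3 surface $S$ containing $C$. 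This part is essentially what was already invoked in Section \ref{main_argument}.

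The remaining cases are the special curves whose canonical ideal is not generated by quadrics: hyperelliptic in every genus, trigonal in every $g \geq 3$, and the plane quintic case in $g=6$. For these I would use the distinguished pencil $|A|$ of degree $k$ (with $k=2, 3,$ or $5$) to realize $C$ as a divisor on a suitable Hirzebruch surface $\mathbb{F}_n$, and then construct $S$ as the minimal resolution of a double cover of $\mathbb{F}_n$ branched along a smooth element of $|-2K_{\mathbb{F}_n}|$ containing $C$. Such anticanonical double covers are K3 surfaces, so the preimage of $C$ sits inside a K3. In the hyperelliptic case for $g = 2$ one can alternatively take $S$ to be a smooth degree $6$ hypersurface in the weighted projective space $\mathbb{P}(1,1,1,3)$ through the image of $C$.

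The main obstacle is uniformity: one must show the construction succeeds for \emph{every} $C$, not just a generic one, and simultaneously verify that $S$ can be chosen smooth and that $C \subset S$ is big and nef. Bigness is automatic from $C^2 = 2g-2 > 0$, but nefness requires ruling out a negative intersection with any $(-2)$-curve on $S$, which should follow from genericity of the branch divisor subject to the constraint of containing $C$. The hardest step, I expect, is the plane quintic case in $g=6$ and the trigonal cases in $g = 7, 8$, where one must confirm that the Hirzebruch scroll carrying $C$ actually deforms inside the corresponding Mukai variety, so that the double-cover construction produces a K3 of the expected Picard rank. I would organize the argument as a finite case analysis over $g$ and gonality, each case handled by a direct reference to an explicit projective model.
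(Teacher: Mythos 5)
Note first that in the paper this statement is a quoted result: the proof there consists of citing Ide's theorem in the form ``every smooth curve of genus $2\leq g\leq 8$ embeds as an ample class in the smooth locus of a K3 surface with at worst rational double points'', passing to the crepant (minimal) resolution, which is again a K3, and observing that the proper transform of $C$, while possibly no longer ample against the exceptional $(-2)$-curves, is still big and nef. In fact that last point is automatic: an irreducible curve with $C^2=2g-2>0$ on a smooth surface is nef and big, so the ``hardest step'' is never the nefness check you worry about at the end, but the embedding itself for \emph{every}, arbitrarily special, curve. Your proposal attempts to reprove the embedding from scratch, and as it stands it has two genuine gaps. The first is coverage: the Mukai linear-section models in genus $6,7,8$ require much more than ``canonical ideal generated by quadrics''. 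A genus $7$ curve is a linear section of the spinor tenfold only if it carries no $g^1_4$, a genus $8$ curve is a linear section of $G(2,6)$ only if it carries no $g^2_7$, and in genus $6$ there are further exclusions (e.g.\ bielliptic curves). Thus a tetragonal, non-trigonal genus $7$ curve, or a genus $8$ curve with a $g^2_7$ (a $7$-nodal plane septic, of gonality $5$), is handled by neither your generic branch nor your special branch (hyperelliptic, trigonal, plane quintic): the Petri-type dichotomy you use does not exhaust $\mathcal{M}_g$.

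The second gap is that the construction proposed for the special curves fails numerically. If a hyperelliptic $C$ sits in $\mathbb{F}_n$ as a bisection, $C\in|2\sigma+kf|$ with $\sigma^2=-n$, then the residual class $R=-2K_{\mathbb{F}_n}-C$ satisfies $C\cdot R=8$ for all $n,k$; for a trigonal trisection of genus $g$ one gets $C\cdot R=10-g>0$ in your range (and for a plane quintic inside a sextic branch curve in $\mathbb{P}^2$ the residual line meets $C$ in $5$ points). Hence no \emph{smooth} member of $|-2K_{\mathbb{F}_n}|$ can contain $C$, and the double cover you describe does not exist. Allowing a nodal branch divisor $C+R$ and resolving does not rescue the hyperelliptic case either: with $k=g+n+1$ the residual class is $2\sigma+(n+3-g)f$, and for $g\geq 4$ every effective member contains $2\sigma$ in its fixed part, so the branch divisor is never even reduced. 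The correct classical constructions are different: a hyperelliptic curve is obtained as the \emph{preimage} of a rational curve under a double cover $S\to\mathbb{F}_n$ or $S\to\mathbb{P}^2$ whose branch curve meets that rational curve in the $2g+2$ prescribed Weierstrass images, and a trigonal curve as a trisection of an elliptic K3; proving that every curve with its given moduli actually arises this way (and dealing with the non-generic loci above) is precisely the nontrivial content of Ide's paper. So either redo the special cases with these models and enlarge the case list, or take the paper's route: quote Ide and resolve the rational double points.
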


In \cite{Ide} it is proved that all smooth curves of genera $2\leq g\leq 8$ can be embedded as ample classes in the smooth locus of a K3 with at worst rational double points. It is a standard fact that every complex projective surface $S$ with at worst ADE singularities admits a crepant resolution (see \cite{ypgR}). In our case $S$ is a K3 surface with at worst rational double points, so there is a unique resolution 
$$\pi:\tilde{S}\to S.$$
The resolution is crepant meaning $\tilde{S}$ is again a K3 and if $\tilde{C}$ is the proper transform of $C$, as divisor $\tilde{C}$ might cease to be ample (it can have trivial intersection with $(-2)$-curves) but it is still big and nef.

Let $[C,x_1,\ldots,x_n]\in \Hgmu$ be a general point on a connected (irreducible) stratum with $3\leq g\leq 8$ and let $S$ be a big and nef K3 extension of $C$. The map $\phi_{C}:S\dashrightarrow \hat{S}\subset\pp^g$ restricted to $C$ is the canonical map
$$\phi_{K_C}:C\to\pp^{g-1}.$$
The point $[C,x_1,\ldots,x_n]$ is general and we are under the assumption that $\Hgmu$ is connected. Therefore, $C$ is not hyperelliptic and $\phi_{K_C}$ is an embedding. We can repeat the same construction as for the general case. Since the general hyperplane of $\pp^g$ in the pencil of hyperplanes through $\Lambda_\mu$ as before intersects $\hat{S}$ in a smooth curve, the pull back is smooth (it does not contains $(-2)$-curves).

This gives us uniruledness for every irreducible stratum $\Hgmubar$ in the range $3\leq g\leq 8$.

\subsection{Genus $g=9$.} We have already proven in \S1.1 that $\mathcal{H}_9(\mu)$ is uniruled for $l(\mu)\geq8$. We can improve the lower bound by one. For small length partitions the forgetful map 
$$\pi:\Hgmu\to\Mg$$
is no longer dominant and in order to carry out the argument above one has to prove that the image of Mukai's map 
$$\begin{array}{rcl}
\mathcal{V}_{g,0}&\to& \Mg\\
(S,H,C)&\mapsto&\left[C\right]
\end{array}$$ 
intersects $\pi(\Hgmu)$ on a non-empty open in $\pi(\Hgmu)$. A smooth complex curve of genus $9$ can be realized as an hyperplane section of a K3 if its not pentagonal (has no $g^1_5$), cf. \cite[Thm. A]{3MukaiK3}. In particular the image of $\mathcal{V}_{9,0}\to \cal{M}_9$ contains the complement of the Brill-Noether divisor $D^1_5$ consisting of pentagonal curves. Recall that $\hbox{Pic}_{\qq}\left(\Mgbar\right)$ is generated by $\lambda$ and the boundary divisors $\delta_{irr}, \delta_1,\ldots,\delta_{\lfloor g/2\rfloor}$ and the slope of a divisor
$D=a\lambda-b_{irr}\delta_{irr}-\sum_{1\leq i\leq \lfloor g/2\rfloor}b_i\delta_i$
not containing any boundary components is defined to be 
$$s(D):=\frac{a}{\min\{b_{irr},b_1,\ldots,b_{\lfloor g/2\rfloor}\}}.$$
One can check that if $D$ and $D'$ are two divisors on $\Mg$, with $\overline{D}$ and $\overline{D}'$ their closures inside $\Mgbar$, then $\overline{D}\leq \overline{D}'$ implies that $s(\overline{D})\leq s(\overline{D}')$. S. Mullane \cite[\S 5]{2015arXiv150903648M} computed the class of the closure of the image $\Hgmu\to\Mg$ when $l(\mu)=g-2$. In genus $9$ the slope is strictly bigger than eight, in particular, bigger than the slope of the pentagonal locus (cf. \cite{HaMo90})
$$s(\overline{D}_5^1)=6+\frac{12}{10}.$$
In any case, when $\overline{\mathcal{H}}_9(\mu)$ is irreducible and $l(\mu)=g-2$, the classes
$$\pi_*\left[\overline{\mathcal{H}}_9(\mu)\right]\hspace{0.3cm}\hbox{and}\hspace{0.3cm}\overline{D}_5^1$$ 
are not proportional. Moreover, since the slope of $\pi_*\left[\overline{\mathcal{H}}_9(\mu)\right]$ is strictly bigger than $s(D_5^1)$, the image of $\overline{\mathcal{H}}_9(\mu)$ cannot be contained in the pentagonal locus. We can conclude that for a general point $[C,x_1,\ldots,x_{7}]\in \mathcal{H}_9(\mu)$, with partition length $l(\mu)=7$, the curve $C$ can be embedded in a K3 surfaces and the argument above can still be carried out. This finishes the proof of Theorem \ref{thm} for $g=9$.

\begin{remark}
By specialization, it would be enough to show that there is a Brill-Noether general curve in the non-hyperelliptic component of $\cal{H}_g(2g-2)$, but to construct a Brill-Noether general curve $C$ admitting a subcanonical point is not an easy task.
\end{remark}

\subsection{Genus $10$} 
The genus $10$ case is much more delicate and it is done by studying irreducible nodal curves of arithmetic genus $11$. We define the set $\cal{U}_{10}\subset \cal{H}_{10}(\mu)$ by the condition 
$$[C,x_1,\ldots,x_n]\in \cal{U}_{10}$$ 
if and only if there exists a polarized K3 surface $(S,H)\in\cal{F}_{11}$ and a non trivial map $f:C\to S$, such that $f_*[C]\in \left|H\right|$ and $f$ is the normalization map of the irreducible nodal curve $f(C)$ having a single node at $f(x_1)=f(x_2)$.

\begin{proposition}
Through every point of $\cal{U}_{10}\subset\overline{\cal{H}}_{10}(\mu)$ there passes a rational curve. 
\end{proposition}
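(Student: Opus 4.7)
The plan is to adapt the pencil construction of Section~\ref{main_argument} to the one-nodal $K3$ model supplied by the hypothesis $[C,x_1,\ldots,x_n]\in\cal{U}_{10}$. Write $f\colon C\to X\subset S\subset\pp^{11}$ for this model, with $(S,H)\in\cal{F}_{11}$, $X\in|H|$ irreducible with its unique node at $p_0=f(x_1)=f(x_2)$, and let $B_1,B_2$ be the two local branches of $X$ at $p_0$, labelled so that $f^{-1}(B_i)\ni x_i$. By adjunction,
\[
f^{*}\omega_X \;=\;\omega_C(x_1+x_2)\;=\;\cal{O}_C\!\left((m_1+1)x_1+(m_2+1)x_2+\textstyle\sum_{i\geq 3}m_ix_i\right),
\]
so there is a distinguished canonical divisor $D\in|\omega_X|$ with local multiplicity $m_i+1$ along $B_i$ at $p_0$ for $i=1,2$, and multiplicity $m_i$ at $y_i:=f(x_i)$ for $i\geq 3$. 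Inside the canonical model $X\hookrightarrow H'_X\cong\pp^{10}$, where $H'_X\subset\pp^{11}$ is the unique hyperplane containing $X$, this $D$ is cut by a (unique) hyperplane $\Pi\subset H'_X$; viewed in $\pp^{11}$, $\Pi$ is a codimension-two linear subspace.

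Assuming $m_1,m_2\geq 1$ so that $x_1,x_2$ are genuine zeros, the contact condition of order at least two along each branch forces $\Pi$ to contain both tangent lines $T_{p_0}B_1$ and $T_{p_0}B_2$, and therefore the full tangent plane $T_{p_0}S$. The construction is then to take the pencil $\{H'_t\}_{t\in\pp^1}$ of hyperplanes of $\pp^{11}$ containing $\Pi$, with $H'_0=H'_X$, and the induced pencil of curves $C_t:=H'_t\cap S\in|H|$ on $S$. Since every $H'_t\supset T_{p_0}S$, each $C_t$ is singular at $p_0$; for $t$ general $C_t$ is an irreducible $1$-nodal curve whose unique node is at $p_0$, so its normalization $\nu_t\colon\tilde{C}_t\to C_t$ is smooth of genus $10$ with distinct preimages $\tilde{x}_1^t,\tilde{x}_2^t$ of the node. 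The base locus of the pencil on $S$ is the zero-dimensional scheme $\Pi\cap S=D$ of degree $H^2=20$, providing marked points $\tilde{y}_i^t:=\nu_t^{-1}(y_i)$ on the smooth part, and pulling back the relation $\nu_t^{*}\omega_{C_t}=\omega_{\tilde{C}_t}(\tilde{x}_1^t+\tilde{x}_2^t)$ yields
\[
\omega_{\tilde{C}_t}\;\cong\;\cal{O}_{\tilde{C}_t}\!\left(m_1\tilde{x}_1^t+m_2\tilde{x}_2^t+\textstyle\sum_{i\geq 3}m_i\tilde{y}_i^t\right).
\]
Thus $[\tilde{C}_t,\tilde{x}_1^t,\ldots,\tilde{x}_n^t]\in\cal{H}_{10}(\mu)$, and the recipe defines a rational map $\gamma\colon\pp^1\dashrightarrow\overline{\cal{H}}_{10}(\mu)$ with $\gamma(0)=[C,x_1,\ldots,x_n]$.

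The hard part is verifying that $\gamma$ is non-constant. Following Lemma~\ref{nontrivial}, if $\gamma$ were constant the family of normalizations $\{\tilde{C}_t\}_{t\in\pp^1}$ would be isotrivial; an \'etale base change to kill monodromy together with a desingularization of the blow-up of $S$ along the base locus would then exhibit $S$ as birational to $\tilde{C}_0\times\pp^1$, contradicting the fact that a K3 surface is not ruled. Subsidiary technical points I would need to verify carefully are: that a general $C_t$ in the pencil acquires no singularity beyond the prescribed node at $p_0$, so that $\gamma$ lands in the open stratum rather than a deeper boundary component; that the scheme-theoretic multiplicities of $\Pi\cap S$ really decompose along the two branches at $p_0$ as $(m_1+1)+(m_2+1)$, which is a local osculation calculation at the smooth point $p_0\in S$; and the boundary cases where some $m_i=0$, for which the tangent-plane argument producing the singularity of $C_t$ at $p_0$ requires a separate argument.
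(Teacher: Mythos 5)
Your overall strategy (a pencil of hyperplanes through an osculating codimension-two plane, plus the Matsusaka--Mumford argument of Lemma \ref{nontrivial} for non-isotriviality) is sound, and it is essentially the projective-model translation of what the paper does. But there is a genuine gap at the step where you conclude $[\tilde{C}_t,\tilde{x}_1^t,\ldots,\tilde{x}_n^t]\in\cal{H}_{10}(\mu)$. What is automatic from your construction is only this: the divisor cut by $\Pi$ on $C_t$ is the $t$-independent base scheme $X\cap X'$, whose part at $p_0$ has total length $m_1+m_2+2$; hence pulling back to the normalization gives $\omega_{\tilde{C}_t}\cong\cal{O}\bigl((a_t-1)\tilde{x}_1^t+(b_t-1)\tilde{x}_2^t+\sum_{i\geq3}m_i\tilde{y}_i^t\bigr)$ where $a_t,b_t\geq2$ are the contact orders of $\Pi$ with the two branches of $C_t$ at $p_0$, constrained only by $a_t+b_t=m_1+m_2+2$. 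Nothing you have written forces $\{a_t,b_t\}=\{m_1+1,m_2+1\}$, nor a labelling of the two nodal preimages that is algebraic in $t$ and compatible with $(m_1,m_2)$; if the distribution were different for general $t$, your $\pp^1$ would sweep out a rational curve in the closure of a \emph{different} stratum, and the Proposition would not follow. Your flagged ``subsidiary point'' about the multiplicities of $\Pi\cap S$ along the branches at $p_0$ concerns the branches of $X$ (where it is true by construction of $\Pi$); the real issue is the branches of the varying member $C_t$, and settling it requires a local analysis at the node (e.g.\ when $m_1,m_2\geq2$ one must show the quadratic part of the second generator of the pencil is proportional to the product of the two branch tangents of $X$, so every member stays tangent to both branches, and then control the higher-order contact), together with a separate treatment of $m_2=1$ and of $m_1=m_2$, where the second marked point, resp.\ the ordering, is not pinned down and monodromy can interfere. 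This last point is not cosmetic: it is exactly why the paper argues the case $m_1>m_2=1$ separately and why for $\mu=(1,\ldots,1)$ it only obtains the statement modulo the involution, excluding $l(\mu)=18$ from Theorem \ref{thm}.

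The paper's proof is built precisely to make this bookkeeping automatic: it blows up the node, $\eps:\tilde{S}\to S$, views $C\in|\eps^*H-2E|$, and takes the pencil $\pp H^0\bigl(\tilde{S},\cal{I}_{D/\tilde{S}}(C)\bigr)$ of curves through the \emph{fixed} divisor $D=(m_1-1)x_1+(m_2-1)x_2+\sum_{i\geq3}m_ix_i\subset C\subset\tilde{S}$ (a $\pp^1$ by the ideal-sheaf sequence and $h^1(\cal{O}_{\tilde{S}})=0$). Then the marked points, including the nodal preimages $x_1,x_2\in E$, are literally constant base points, and adjunction $K_{C'}=\cal{O}_{C'}(C'+E)$ hands you the multiplicities $m_1,m_2$ with no local computation and no monodromy. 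It also avoids your implicit use of a projective model $S\subset\pp^{11}$, which is not guaranteed for an arbitrary $(S,H)\in\cal{F}_{11}$ appearing in the definition of $\cal{U}_{10}$ (the polarization could be hyperelliptic or contract $(-2)$-curves), whereas the intrinsic argument on $\tilde{S}$ needs none of this. So either carry out the local contact analysis at $p_0$ and the labelling discussion in full, or pass to the blow-up as the paper does.
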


\begin{proof}
Let $[C,x_1,\ldots,x_n]$ be a point on $\cal{U}_{10}$ and 
$$\epsilon:\tilde{S}\to S$$
the blow-up of $S$ at the node $f(x_1)=f(x_2)$. The curve $C$ can be embedded in $\tilde{S}$. Moreover, $C\in \left|\epsilon^*H-2E\right|$, where $E$ is the exceptional divisor of $\epsilon$. By adjunction
$$\cal{O}_{C}\left(C\right)\cong K_{C}\left(-x_1-x_2\right)\cong\cal{O}_{C}\left((m_1-1)x_2+(m_2-1)x_2+\sum_{i\geq 3}m_ix_i\right).$$
Let us assume that $l(\mu)\geq 2$. The divisor 
$$D=(m_1-1)x_2+(m_2-1)x_2+\sum_{i\geq 3}m_ix_i$$
is effective on $C$. The exact sequence 
$$0\to\cal{O}_{\tilde{S}}\to\cal{I}_{D\big/\tilde{S}}\left(C\right)\to\cal{O}_{C}\to 0,$$
where the middle term is the ideal sheaf of the closed subscheme $D\subset \tilde{S}$ twisted by $C$, proves that
$$\pp H^0\left(\tilde{S},\cal{I}_{D\big/\tilde{S}}\left(C\right)\right)\cong \pp^1.$$
There is a rational map 
$$\pp^1\dashrightarrow \cal{U},$$
sending the generic element $C'\in |\eps^*H-2E|$ passing trough $D$ to 
$$C'\in \pp^1\mapsto[C',x_1,x_2,\ldots,x_n].$$
The same argument as in Lemma \ref{nontrivial} applies to prove that the induced map to $\Mg$ is not trivial.

We can assume $m_1\geq m_2\geq\ldots\geq m_n$. Notice that the argument fails when the set $\{x_1,x_2\}$ is not contained in the support of $D$. If $m_1>m_2=1$ we can still keep track of the points since $x_1\in \hbox{Supp}(D)$ and for $C''$ general, $C''\cap E=x_1+q$. We impose $x_2=q$ and the argument still holds true. 
\end{proof}

\begin{remark}
When $g=10$, the only partition of maximal length $l(\mu)=18$ is 
$$\mu=(1,\ldots,1).$$ 
Our map is well defined in the quotient
$$
\begin{array}{rcl}
\pp^1&\dashrightarrow&\cal{U}_{10}\big/\left(\zz\big/2\zz\right)\\
C''&\mapsto&\left[C'',y_1+y_2,x_3,\ldots,x_n\right]
\end{array}
$$
where $y_1+y_2=C''\cap E$ and we have uniruledness for the quotient 
$$\cal{U}_{10}\to\cal{U}_{10}\big/\left(\zz\big/2\zz\right).$$
\end{remark}

Consider the cartesian diagram
\begin{displaymath}
\xymatrix{
\cal{V}_{11,1}\times\cal{H}_{10}(\mu)\ar[r]^{p_2}\ar[d]_{p_1}&\cal{H}_{10}(\mu)\ar[d]^{\pi}\\
\cal{V}_{11,1}\ar[r]^{c}&\cal{M}_{10,[2]}.
}
\end{displaymath}
Notice that the image of $p_2$ is exactly $\cal{U}_{10}$, and in the range $l(\mu)\geq g+1$ the map $\pi$ is dominant. It remains to prove that $c$ is dominant to conclude that $\cal{U}_{10}$ dominates the strata and uniruledness follows. 

\begin{corollary}[of Theorem \ref{thmDT}]
For every holomorphic partition $\mu$ of length $18>l(\mu)\geq 11$ the inclusion defined above $\cal{U}_{10}\hookrightarrow\cal{H}_{10}(\mu)$ is dominant.
\end{corollary}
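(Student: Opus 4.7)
The plan is to derive the corollary from Theorem~\ref{thmDT} by a straightforward diagram chase on the cartesian square displayed above, combined with the dominance of the forgetful map $\pi$. Applying Theorem~\ref{thmDT} with $g=11$ and $\delta=1$ (so that $1\leq\delta\leq g-2$ and $g\neq 10$), one obtains that
$$c:\cal{V}_{11,1}\to\cal{M}_{10,[2]}$$
is dominant, and in fact generically finite since the generic fiber has dimension $22-2g=0$. Let $U\subset\cal{M}_{10,[2]}$ be a dense open subset contained in the image of $c$.

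Next, the hypothesis $l(\mu)\geq 11=g+1$ enters: as recorded in the paragraph just before the corollary, in this range the forgetful map $\pi:\cal{H}_{10}(\mu)\to\cal{M}_{10,[2]}$ sending $[C,x_1,\ldots,x_n]$ to $[C,x_1+x_2]$ is dominant, in agreement with the dimension count $\dim\cal{H}_{10}(\mu)=18+l(\mu)\geq 29=\dim\cal{M}_{10,[2]}$. Hence $\pi^{-1}(U)$ is a non-empty open subset of $\cal{H}_{10}(\mu)$, and therefore dense by irreducibility of the stratum under consideration (the table in Theorem~\ref{thm} concerns irreducible strata).

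Finally, for any $[C,x_1,\ldots,x_n]\in\pi^{-1}(U)$, the point $\pi([C,x_1,\ldots,x_n])$ admits by construction a preimage $(S,X)\in\cal{V}_{11,1}$ under $c$, which exhibits $C$ as the normalization of a $1$-nodal member of $|H|$ with node-preimages $x_1,x_2$; this is exactly the condition defining $\cal{U}_{10}$. Thus $\pi^{-1}(U)\subseteq\cal{U}_{10}$, and density of $\pi^{-1}(U)$ yields density of $\cal{U}_{10}$, i.e.\ dominance of the inclusion. The substance of the argument sits entirely in Theorem~\ref{thmDT}; once that is in hand, the corollary is a formal fiber-product consequence, whose only point requiring care is the identification of the image of $p_2$ with $\cal{U}_{10}$, which is however immediate from the parallel definitions of the two moduli problems.
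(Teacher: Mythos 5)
Your argument is correct and coincides with the paper's own (implicit) proof: Theorem \ref{thmDT} applied with $(g,\delta)=(11,1)$ gives dominance (indeed generic finiteness, since $22-2g=0$) of $c$, the dominance of $\pi$ for $l(\mu)\geq g+1$ is exactly the assertion made in the text preceding the corollary, and the identification of the image of $p_2$ with $\cal{U}_{10}$ completes the fiber-product diagram chase. The only refinement I would ask for: since the corollary makes no irreducibility hypothesis, justify irreducibility of $\cal{H}_{10}(\mu)$ directly from the Kontsevich--Zorich classification recalled in the introduction (a partition of $18$ of length at least $11$ can neither have all entries even nor be of the form $(2l-1,2l-1)$, so the stratum is connected) rather than by appealing to the scope of the table in Theorem \ref{thm}.
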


\begin{corollary}[of Theorem \ref{thmDT}]
For every partition $\mu$ of length $11\leq l(\mu)<18$, if $\overline{\cal{H}}_{10}(\mu)$ is irreducible, then is uniruled.\end{corollary}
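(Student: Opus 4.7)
The plan is to combine the two preceding results in a direct way; essentially all of the substantive work has already been done in Theorem \ref{thmDT} and in the Proposition that produces the pencil. The role of this corollary is assembly.

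First, I would invoke the preceding corollary of Theorem \ref{thmDT}: for $11\leq l(\mu)<18$, the inclusion $\mathcal{U}_{10}\hookrightarrow \mathcal{H}_{10}(\mu)$ is dominant. Since $\mathcal{U}_{10}$ is the image of the morphism $p_2$ from the cartesian diagram above, it is constructible by Chevalley's theorem, and being dominant it therefore contains a nonempty open subset $U\subset \mathcal{H}_{10}(\mu)$. At this point the irreducibility hypothesis enters: because $\overline{\mathcal{H}}_{10}(\mu)$ is irreducible, so is the open stratum $\mathcal{H}_{10}(\mu)$, and hence $U$ is dense in all of $\overline{\mathcal{H}}_{10}(\mu)$. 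Without irreducibility, $\mathcal{U}_{10}$ would only be guaranteed to be dense in some components.

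Next I would apply the preceding Proposition: through every point $[C,x_1,\ldots,x_n]\in\mathcal{U}_{10}$ passes a non-constant rational curve in $\overline{\mathcal{H}}_{10}(\mu)$, coming from the pencil
$$\pp H^0\!\left(\tilde S, \mathcal{I}_{D/\tilde S}(C)\right)\cong \pp^1$$
on the blow-up $\tilde S\to S$ of the K3 at the node $f(x_1)=f(x_2)$. These pencils are induced by a family of line bundles on the universal blow-up of the universal K3 over $\mathcal{V}_{11,1}$ and therefore vary algebraically over $\mathcal{V}_{11,1}$. Assembling them yields a rational map
$$\Phi:\cal{V}_{11,1}\times \pp^1\dashrightarrow \overline{\cal{H}}_{10}(\mu)$$
whose image is exactly (up to taking closure) the set of rational curves of the Proposition; in particular this image contains $\mathcal{U}_{10}$.

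Combining the two steps, $\Phi$ is dominant, which by the standard characterization is precisely the assertion that $\overline{\mathcal{H}}_{10}(\mu)$ is uniruled. The only potentially subtle point is purely set-theoretic, namely that a dense constructible subset of an irreducible variety contains a dense open, and once this is noted the corollary follows formally. The genuine obstacle—ensuring that $\mathcal{U}_{10}$ really is dense, i.e.\ that $\delta$-nodal K3 curves realize a general pointed curve in the image of the forgetful map from $\mathcal{H}_{10}(\mu)$—has already been addressed by Theorem \ref{thmDT} and the class computation of $c_*[\mathcal{V}_{10,\delta}]$.
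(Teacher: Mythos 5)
Your overall strategy is the one the paper uses: density of $\mathcal{U}_{10}$ in $\mathcal{H}_{10}(\mu)$ (the previous corollary, which rests on Theorem \ref{thmDT} and needs the irreducibility hypothesis exactly as you say), combined with the Proposition producing a non-trivial rational curve through every point of $\mathcal{U}_{10}$. However, the assembly step as you wrote it does not work. The map $\Phi:\mathcal{V}_{11,1}\times\pp^1\dashrightarrow\overline{\mathcal{H}}_{10}(\mu)$ is not even well defined: a point of $\mathcal{V}_{11,1}$ is a pair $(S,X)$ with $X$ one-nodal, which determines $C$ and the two points $x_1,x_2$ over the node, but not the remaining marked points $x_3,\ldots,x_n$. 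The pencil of the Proposition is $\pp H^0\bigl(\tilde S,\mathcal{I}_{D/\tilde S}(C)\bigr)$ with $D=(m_1-1)x_1+(m_2-1)x_2+\sum_{i\geq 3}m_ix_i$, so it genuinely depends on all the marked points; the pencils are \emph{not} cut out by a family of line bundles over $\mathcal{V}_{11,1}$ alone, since the full linear system $|\eps^*H-2E|$ on $\tilde S$ has dimension much larger than one. Moreover, even if one could make some choice, a dimension count kills your $\Phi$ in most of the range: $\dim\bigl(\mathcal{V}_{11,1}\times\pp^1\bigr)=19+10+1=30$, while $\dim\mathcal{H}_{10}(\mu)=3\cdot 10-3+l(\mu)-9=18+l(\mu)$, so $\Phi$ cannot dominate (nor can its image contain the dense subset $\mathcal{U}_{10}$) once $l(\mu)\geq 13$.

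The repair is the object you already wrote down: the base of the ruling must be the fiber product $\mathcal{V}_{11,1}\times_{\mathcal{M}_{10,[2]}}\mathcal{H}_{10}(\mu)$ appearing in the cartesian diagram, which records the extra marked points and whose image under $p_2$ is exactly $\mathcal{U}_{10}$; the pencils of the Proposition vary algebraically over this fiber product, and dominance of $p_2$ (previous corollary plus irreducibility) then gives a dominant map from (fiber product)$\times\pp^1$. Alternatively, skip any global assembly: $\overline{\mathcal{H}}_{10}(\mu)$ is projective and irreducible, $\mathcal{U}_{10}$ contains a dense open subset by Chevalley as you noted, and in characteristic zero a projective variety with a rational curve through every point of a dense subset is uniruled. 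With either fix your argument coincides with the paper's deduction of the corollary from Theorem \ref{thmDT} and the Proposition.
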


This would conclude the missing case $g=10$ in Theorem \ref{thm}. In the coming section we give a proof of Theorem \ref{thmDT}.


\subsection{Deformation theory of nodal curves on K3 surfaces.}
We recall a few facts about deformation theory of nodal curves on K3 surfaces. For details we refer to \cite{sernesi2006deformations} and \cite{2007arXiv0707.0157F}.

Locally trivial deformations of the pair $(S,X)$ are governed by the sheaf $T_S\langle X\rangle$ defined to be the preimage of $T_X\subset T_S\mid_X$ under the restriction $T_S\to T_S\mid_X$. It sits in two standard exact sequences 
$$0\to T_S(-X)\to T_S\langle X\rangle\to T_X\to 0$$ 
and
\begin{equation}\label{seq1}0\to T_S\langle X\rangle\to T_S\to N'_{X/S}\to 0,\end{equation}
where $N'_{X/S}$ is the \textit{equisingular normal sheaf} of $X$ in $S$. This sheaf governs the deformation theory when $S$ is fixed. Moreover,
$$T_{X}V_{\delta}(S,H)=H^0(X,N'_{X/S}).$$
The first order locally trivial deformations of the pair $(S,X)$ are parametrized by $H^1(S,T_S\langle X\rangle)$. Obstructions are parametrized by $H^2$ and local automorphisms by $H^0$. The theory is unobstructed and the coarse moduli $\cal{V}_{g,\delta}$ is smooth as a stack \cite[Prop 4.8]{2007arXiv0707.0157F}. For any $(S,X)$
\begin{equation}
\label{van0}
h^2(T_S\langle X\rangle)=h^0(T_S\langle X\rangle)=0
\end{equation}
and 
$$T_{(S,X)}\cal{V}_{g,\delta}=H^1(S,T_S\langle X\rangle).$$

Much more can be said here. Given such a pair $(S,X)$ there is a unique embedded resolution of $X$ given by the following diagram
\begin{displaymath}
\xymatrix{
C\cap E\ar[d]\ar@{^(->}[r]&C\ar[d]^{f}\ar@{^(->}[r]&\tilde{S}\ar[d]^{\eps}\\
Sing(X)\ar@{^(->}[r]&X\ar@{^(->}[r]&S,
}
\end{displaymath}
where $\tilde{S}$ is the blow-up of $S$ along the nodes, $E=E_1+\ldots+E_\delta$ is the exceptional divisor, $C$ is a smooth genus $g-\delta$ curve and $f:C\to X\subset S$ is the normalization map. Let us take a look at the tangent exact sequence for the normalization map  
$$0\to T_C\to f^*T_S\to N_f\to 0.$$
Here $N_f$ is the normal sheaf of the map $f:C\to S$. With this notation \cite[Lemma 4.16]{2007arXiv0707.0157F} 
\begin{equation}\label{eq0}f_*(N_f)=N'_{X/S}\hbox{ and }H^i(N'_{X/S})\cong H^i(N_f)\hbox{ for } i=0,1.\end{equation}
This is not surprising, since the group $H^0(N_f)$ can be identified with the tangent space at $[f]$ of the space of maps $f:C\to S$ from genus $g-\delta$ smooth curves to a fixed target with $f_*C=H$. There is a one to one correspondence between $\delta$-nodal curves on $S$ in the linear system $|H|$ and maps $f$. The correspondence is given by normalization 
$$\begin{array}{rcl}
V_\delta(S,H)&\to&M_{g-\delta}(S,H)\\
X\subset S&\mapsto& f:C\to S.
\end{array}$$
To recover the differential of the map $c_{g,\delta}$ we go to $\tilde{S}$. Consider the following diagram as in \cite{2007arXiv0707.0157F}
\begin{equation}\label{diag}
\begin{gathered}
\xymatrix{
&0\ar[d]&0\ar[d]&&\\
&\eps^*T_S(-C)\ar[d]\ar[r]^{\cong}&\eps^*T_S(-C)\ar[d]&&\\
0\ar[r]&\sh{F}_C\ar[d]^{\tau}\ar[r]&\eps^*T_S\ar[d]\ar[r]^{\lambda}&N_f\ar@{=}[d]\ar[r]&0\\
0\ar[r]&T_C\ar[d]\ar[r]&f^*T_S\ar[d]\ar[r]&N_f\ar[r]&0,\\
&0&0&&
}
\end{gathered}
\end{equation}
where $\sh{F}_C$ is the kernel of the composition $\lambda:\eps^*T_S\to f^*T_S\to N_f$. It turns out \cite[Prop. 4.22]{2007arXiv0707.0157F} that 
\begin{equation}
\label{van2}
\eps_*\sh{F}_C\cong T_S\langle X\rangle\hspace{0.3cm}\hbox{and}\hspace{0.3cm}H^i(S,T_S\langle X\rangle)\cong H^i(\tilde{S},\sh{F}_C)\hbox{ for } i=0,1,2.
\end{equation}
Moreover, $H^1(\tau)$ is the differential of the map $c_{g,\delta}$. The four authors proved that in the desired range, for a general $(S,H,C)$ in any irreducible component of $\mathcal{V}_{g,\delta}$, the map $H^1(\tau)$ is surjective, see \cite[Thm. 5.1]{2007arXiv0707.0157F}.

\begin{proposition}
\label{prop1}
With the same notation as above $\sh{F}_C\mid_E\cong \mathcal{O}_E(-1)^{\oplus 2}$.
\end{proposition}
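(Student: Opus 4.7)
The plan is to restrict the defining short exact sequence
$$0 \to \sh{F}_C \to \eps^{*}T_{S} \to N_{f} \to 0$$
from (\ref{diag}) to the exceptional divisor $E$ and read off the summands directly. Since the $E_{i}$ are pairwise disjoint, it suffices to work on a single component $E_{i}\cong\pp^{1}$, with $C\cap E_{i}=\{x_{1},x_{2}\}$ the two preimages of the corresponding node $p\in S$. The first step will be to check that tensoring the sequence above with $\cal{O}_{E}$ stays exact on the left. This reduces to the vanishing of $\mathrm{Tor}_{1}^{\cal{O}_{\tilde{S}}}(N_{f},\cal{O}_{E})$, which follows from the fact that $N_{f}$ is a locally free $\cal{O}_{C}$-module and that $C$ and $E$ meet transversally at $x_{1},x_{2}$ (a local equation for $C$ remains a non-zero-divisor on $\cal{O}_{E}$).

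Next, I would identify the two outer terms. Because $\eps$ contracts $E_{i}$ to the single point $p$, the pullback $\eps^{*}T_{S}|_{E_{i}}$ is canonically the constant bundle $T_{p}S\otimes\cal{O}_{E_{i}}\cong\cal{O}_{E_{i}}^{\oplus 2}$. On the other hand, $N_{f}$ is supported on $C$ and is locally free of rank one there, so by transversality $N_{f}\otimes\cal{O}_{E_{i}}$ is the length-two skyscraper $k(x_{1})\oplus k(x_{2})$. Thus the restricted sequence becomes
$$0 \to \sh{F}_{C}|_{E_{i}} \to \cal{O}_{E_{i}}^{\oplus 2} \xrightarrow{(\phi_{1},\phi_{2})} k(x_{1})\oplus k(x_{2}) \to 0,$$
where the fibrewise map at $x_{j}$ is the composition $T_{p}S\twoheadrightarrow T_{p}S/df_{x_{j}}(T_{x_{j}}C)=(N_{f})_{x_{j}}$. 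So $\phi_{j}\in T_{p}S^{\vee}$ is the linear functional whose kernel is the branch direction $v_{j}:=df_{x_{j}}(T_{x_{j}}C)\in T_{p}S$.

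Finally, any rank-$2$ vector bundle on $\pp^{1}$ of degree $-2$ is either $\cal{O}(-1)^{\oplus 2}$ or $\cal{O}\oplus\cal{O}(-2)$; the second case occurs precisely when the kernel bundle admits a nowhere-vanishing section, equivalently when some nonzero constant section of $\cal{O}_{E_{i}}^{\oplus 2}$ is killed by both $\phi_{1}$ and $\phi_{2}$. This happens if and only if $\phi_{1}$ and $\phi_{2}$ are proportional, i.e.\ if and only if the two branch directions $v_{1},v_{2}\subset T_{p}S$ coincide. Since $p$ is a node, the two branches of $X$ at $p$ have distinct tangent lines, so $\phi_{1}$ and $\phi_{2}$ are linearly independent and we conclude $\sh{F}_{C}|_{E_{i}}\cong\cal{O}_{E_{i}}(-1)^{\oplus 2}$. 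The main technical obstacle is the initial Tor-vanishing and the clean identification of $\eps^{*}T_{S}|_{E}$ with the constant bundle $T_{p}S\otimes\cal{O}_{E}$; once these are in place the distinctness of branches at a node finishes the argument.
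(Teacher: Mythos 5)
Your argument is correct, but it takes a genuinely different route from the paper. You restrict the defining sequence $0\to\sh{F}_C\to\eps^*T_S\to N_f\to 0$ to $E_i$, justify left-exactness by the Tor-vanishing coming from the fact that $C$ and $E_i$ meet transversally (which indeed holds: the proper transform of a node meets the exceptional curve in two reduced points), and then identify $\sh{F}_C\mid_{E_i}$ explicitly as the elementary modification of $T_pS\otimes\cal{O}_{E_i}\cong\cal{O}_{E_i}^{\oplus 2}$ along the two branch directions; the node condition (distinct tangents of the two branches) then gives $h^0=0$ and hence the balanced splitting. The paper instead only extracts numerical and cohomological information: it computes $\deg\left(\sh{F}_C\mid_{E_i}\right)=-2$ via the derived restriction $L^1j^*\omega_C$ and adjunction $\omega_C\cong\cal{O}_C(E+C)$, and then proves $h^0=h^1=0$ by a push-forward argument, using $R^1\eps_*\sh{F}_C=0$ (which rests on the identifications (\ref{eq0}) and (\ref{van2}) imported from Flamini--Knutsen--Pacienza--Sernesi) together with $R^2\eps_*\sh{F}_C(-E_i)=0$ and Riemann--Roch on $E_i\cong\pp^1$. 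Your proof is more elementary and self-contained, and it makes visible exactly where nodality enters; the paper's proof stays within the circle of direct-image identities that it needs later in the section anyway. One small imprecision: it is not true that every rank-two degree $-2$ bundle on $\pp^1$ is $\cal{O}(-1)^{\oplus 2}$ or $\cal{O}\oplus\cal{O}(-2)$ (e.g. $\cal{O}(1)\oplus\cal{O}(-3)$), but this is harmless here, since your bundle is a subsheaf of $\cal{O}_{E_i}^{\oplus 2}$ (so no positive summands occur), or more directly because the vanishing $h^0=0$ that you establish already forces the splitting type $\cal{O}(-1)^{\oplus 2}$.
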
 

\begin{proof}
Let $j:E_i\hookrightarrow \tilde{S}$ be the closed embedding of one of the components of the exceptional divisor. Notice that $\eps^*T_S\mid_{E_i}\cong \cal{O}_{E_i}^{\oplus 2}$ and $N_f\cong \omega_C$. From the second row in (\ref{diag}) we have
$$
0\to L^1j^*\omega_C\to \sh{F}_C\mid_{E_i}\to \cal{O}_{E_i}^{\oplus 2}\to \cal{O}_{E_i\cap C}\to 0
$$
and $C\cap E_i=p_i+q_i$. Then, 
$$\deg(\sh{F}_C\mid_{E_i})=\deg L^1j^*\omega_C-2.$$
On the other hand by adjunction $\omega_C\cong\cal{O}_C(E+C)$ and pulling back by $j$ the exact sequence
$$0\to\cal{O}_{\tilde{S}}(E)\to\cal{O}_{\tilde{S}}(E+C)\to\omega_C\to0$$
we get 
$$0\to L^1j^*\omega_C\to\cal{O}_{E_i}(E_i)\to\cal{O}_{E_i}(E_i+C)\to\cal{O}_{p_i+q_i}\to0.$$
Counting degrees, $\deg\cal{O}_{E_i}(E_i)=-1$ and $\deg\cal{O}_{E_i}(E_i+C)=1$. Thus, $\deg L^1j^*\omega_C=0$ and $\deg\sh{F}_C\mid_{E_i}=-2$. The sheaf $\sh{F}_C\mid_{E_i}$ is free on $E_i$ of rank two and degree $-2$. By Riemann-Roch, 
$$h^0\left(E_i,\sh{F}_C\mid_{E_i}\right)=h^1\left(E_i,\sh{F}_C\mid_{E_i}\right).$$
It is enough to prove $h^0=h^1=0$. By pushing forward the second row in (\ref{diag}), we get the exact sequence (\ref{seq1}) and, by (\ref{eq0}) together with (\ref{van2}), we obtain
$$R^1\eps_*\sh{F}_C=0.$$
The exceptional locus of $\eps$ is one dimensional, therefore, $R^2\eps_*\sh{F}_C(-E_i)=0$. Thus, from 
$$0\to \sh{F}_C(-E_i)\to\sh{F}_C\to\sh{F}_C\mid_{E_i}\to 0$$
we get $R^1\eps_*\sh{F}_C\mid_E=0$. In particular, $h^1(E_i,\sh{F}_C\mid_{E_i})=h^0(E_i,\sh{F}_C\mid_{E_i})=0$.
\end{proof}
We have the following corollary:
\begin{corollary}
\label{coro1.10}
The inclusion induces isomorphisms
$$H^i(\tilde{S},\sh{F}_C(-E))\cong H^i(\tilde{S},\sh{F}_C)\hbox{ for }i=0,1,2.$$
In particular $H^1(\tilde{S},\sh{F}_C(-E))$ parametrizes locally trivial first order deformations of the pair $(S,X)$.
\end{corollary}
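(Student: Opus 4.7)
The plan is to deduce this corollary directly from Proposition \ref{prop1} via the standard restriction sequence along the exceptional divisor. Since $\sh{F}_C$ is locally free on $\tilde{S}$, tensoring
\begin{equation*}
0 \to \cal{O}_{\tilde{S}}(-E) \to \cal{O}_{\tilde{S}} \to \cal{O}_E \to 0
\end{equation*}
with $\sh{F}_C$ preserves exactness, so I would work with the short exact sequence
\begin{equation*}
0 \to \sh{F}_C(-E) \to \sh{F}_C \to \sh{F}_C|_E \to 0.
\end{equation*}

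From this the long exact cohomology sequence controls the difference between $H^i(\tilde{S},\sh{F}_C(-E))$ and $H^i(\tilde{S},\sh{F}_C)$ by the cohomology of the restriction $\sh{F}_C|_E$. Here $E = E_1 + \dots + E_\delta$ is a disjoint union of $(-1)$-curves, each isomorphic to $\pp^1$, and Proposition \ref{prop1} identifies $\sh{F}_C|_{E_i} \cong \cal{O}_{E_i}(-1)^{\oplus 2}$. Consequently $H^0(E,\sh{F}_C|_E) = H^1(E,\sh{F}_C|_E) = 0$, and $H^j(E,\sh{F}_C|_E) = 0$ for $j\geq 2$ since $\dim E = 1$. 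Plugging these vanishings into the long exact sequence forces the natural map $H^i(\tilde{S},\sh{F}_C(-E)) \to H^i(\tilde{S},\sh{F}_C)$ to be both injective (because the adjacent $H^{i-1}(E,\sh{F}_C|_E)$ vanishes) and surjective (because $H^i(E,\sh{F}_C|_E)$ vanishes) for $i=0,1,2$, which is exactly the required isomorphism.

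The second assertion then follows by concatenation: the isomorphism $H^1(S,T_S\langle X\rangle) \cong H^1(\tilde{S},\sh{F}_C)$ recorded in (\ref{van2}), combined with the isomorphism just obtained, yields $H^1(\tilde{S},\sh{F}_C(-E)) \cong H^1(S,T_S\langle X\rangle)$, and the latter parametrizes locally trivial first order deformations of $(S,X)$ by the discussion preceding (\ref{van0}). There is no real obstacle in this argument; the only point that could cause trouble is bookkeeping on whether all the required cohomology groups on $E$ vanish, but Proposition \ref{prop1} together with $\dim E = 1$ handles every case at once.
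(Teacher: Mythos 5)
Your argument is correct and is essentially identical to the paper's proof: both tensor $0\to\cal{O}_{\tilde{S}}(-E)\to\cal{O}_{\tilde{S}}\to\cal{O}_E\to0$ with $\sh{F}_C$ and feed the vanishing $H^0(\sh{F}_C\mid_E)=H^1(\sh{F}_C\mid_E)=0$ from Proposition \ref{prop1} into the long exact sequence. The extra details you supply (the $H^{\geq 2}$ vanishing on the one-dimensional $E$ and the identification via (\ref{van2}) for the deformation-theoretic statement) are exactly what the paper leaves implicit.
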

\begin{proof}
If we pass to cohomology in the short exact sequence 
$$0\to\sh{F}_C(-E)\to\sh{F}_C\to\sh{F}_C\mid_E\to0,$$
since $H^0(\sh{F}_C\mid_E)\cong H^1(\sh{F}_C\mid_E)=0$, we obtain our result.
\end{proof}

Consider the exact sequence obtained by tensoring the first column $(\ref{diag})$ with $\cal{O}_{\tilde{S}}(-E)$,
$$0\to\eps^*T_S(-C-E)\to\sh{F}_C(-E)\overset{\sigma}{\to} T_C\left(-\sum_{i=1}^\delta p_i+q_i\right)\to 0.$$

\begin{proposition}
\label{main-prop}
The map $$H^1(\sigma):H^1(\tilde{S},\sh{F}_C(-E))\to H^1(C,T_C(-p_1-q_1-\ldots-p_\delta-q_\delta))$$ is the differential of $c:\cal{V}_{g,\delta}\to \cal{M}_{g-\delta,[2\delta]}$.
\end{proposition}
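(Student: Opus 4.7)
The plan is to reduce the claim to the identification of the differential of the unpointed moduli map $c_{g,\delta}:\cal{V}_{g,\delta}\to\cal{M}_{g-\delta}$ with $H^1(\tau)$ from \cite[Prop.~4.22]{2007arXiv0707.0157F}, upgraded so as to keep track of the marked points $p_i,q_i$. First I would observe that the short exact sequence defining $\sigma$ is obtained by tensoring the first column of diagram \eqref{diag} with $\cal{O}_{\tilde{S}}(-E)$, using the identification $\cal{O}_{\tilde{S}}(-E)\mid_C\cong\cal{O}_C(-D)$ with $D=p_1+q_1+\ldots+p_\delta+q_\delta$. The natural inclusions $\sh{F}_C(-E)\hookrightarrow\sh{F}_C$ and $T_C(-D)\hookrightarrow T_C$ fit $\sigma$ and $\tau$ into a commutative square of sheaves. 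After applying $H^1$, Corollary \ref{coro1.10} turns the left vertical arrow into an isomorphism while the right vertical arrow is the differential of the forgetful map $\cal{M}_{g-\delta,[2\delta]}\to\cal{M}_{g-\delta}$; a priori this already identifies $H^1(\sigma)$ as a lift of $dc_{g,\delta}$ to the marked-point moduli.

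Next I would interpret both sides in deformation-theoretic terms to see that this lift is genuinely the Kodaira--Spencer map of $c$. A class $\xi\in H^1(\tilde{S},\sh{F}_C(-E))$ corresponds, via Corollary \ref{coro1.10}, to a locally trivial first order deformation of $(S,X)$. Simultaneously blowing up the nodes gives a deformation of the triple $(\tilde{S},C,E)$; since $E$ is a disjoint union of $(-1)$-curves and hence rigid on $\tilde{S}$, one can choose a representative in which $E$ is preserved pointwise. Restricting this family to $C$ and taking the intersections $C\cap E_i=\{p_i,q_i\}$ as marked points yields a first order deformation of the pointed curve $(C,D)$ whose Kodaira--Spencer class in $H^1(C,T_C(-D))$ is by construction $dc(\xi)$.

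To match this Kodaira--Spencer class with $H^1(\sigma)(\xi)$ I would run a \v{C}ech calculation on a cover $\{U_\alpha\}$ of $\tilde{S}$ refining $E$. The class $\xi$ is represented by a $1$-cocycle of local vector fields on $\tilde{S}$ which are tangent to $C$ and vanish along $E$, i.e.\ by sections of $\sh{F}_C(-E)$. Applying $\sigma$ restricts this cocycle to $C$ via $\tau$; the vanishing along $E$ forces the restriction to vanish at the points $p_i$ and $q_i$, so that the restricted cocycle lands in $T_C(-D)$ and encodes exactly the deformation of $(C,D)$ described above. Together with the known equality $H^1(\tau)=dc_{g,\delta}$ and the commutativity of the $H^1$-square, this yields $H^1(\sigma)=dc$.

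The main obstacle is the bookkeeping in this last step: one has to verify that the isomorphism of Corollary \ref{coro1.10} is compatible with the passage from deformations of $(S,X)$ to deformations of $(\tilde{S},C,E)$ preserving $E$, so that the \v{C}ech representative with values in $\sh{F}_C(-E)$ is indeed the one computing $dc$. Once this compatibility is pinned down, no new vanishing or transversality is required beyond those already established in Proposition \ref{prop1} and Corollary \ref{coro1.10}, and the identification follows by a formal diagram chase.
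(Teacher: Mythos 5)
Your reduction via the commutative square relating $\sigma$ and $\tau$ proves strictly less than the proposition: since $c_{g,\delta}$ factors as the forgetful map $\cal{M}_{g-\delta,[2\delta]}\to\cal{M}_{g-\delta}$ composed with $c$, the identity $H^1(\tau)=dc_{g,\delta}$ from \cite{2007arXiv0707.0157F} together with Corollary \ref{coro1.10} only shows that $H^1(\sigma)$ agrees with $dc$ modulo the kernel of $H^1(T_C(-D))\to H^1(T_C)$, and that kernel is $2\delta$-dimensional (it is the image of $H^0(T_C\mid_D)$, since $H^0(T_C)=0$ for genus $\geq 2$). So the entire weight of the statement falls on your third step, and precisely that step --- the compatibility of the isomorphism $H^1(\tilde{S},\sh{F}_C(-E))\cong H^1(S,T_S\langle X\rangle)$ with the passage from first order deformations of $(S,X)$ to deformations of the pointed normalization, so that the restricted cocycle really computes the Kodaira--Spencer class of the family of pointed curves --- is what you defer as ``bookkeeping''. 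That compatibility is not a routine check; it is the whole content of the proposition, so as written the argument is incomplete. (A smaller imprecision in the sketch: local sections of $\sh{F}_C(-E)$ are vector fields on $\tilde{S}$ tangent to $C$ and preserving $E$ --- a local computation gives $\eps^*T_S(-E)\subset T_{\tilde{S}}$ --- but they need not vanish along $E$; what matters is only that their restrictions to $C$ vanish at the points $p_i,q_i$, which holds by the very definition of $\sigma$.)

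The paper closes exactly this gap by pushing down to $S$ instead of arguing on $\tilde{S}$: finiteness of $\eps\mid_C$ and the vanishing $R^i\eps_*\sh{F}_C(-E)=0$ for $i>0$ (obtained from Proposition \ref{prop1} and the sequence $0\to\sh{F}_C(-E)\to\sh{F}_C\to\cal{O}_E(-1)^{\oplus 2}\to 0$) let Leray identify $H^1(\sigma)$ with $H^1(\eps_*\sigma)$; then $\eps_*\sh{F}_C(-E)\cong T_S\langle X\rangle$ and $\eps_*T_C(-D)\cong T_X$, the tangent sheaf of the nodal curve, so $\eps_*\sigma$ becomes the restriction map $T_S\langle X\rangle\to T_X$, whose $H^1$ sends a locally trivial deformation of the pair $(S,X)$ to the corresponding locally trivial deformation of $X$, i.e.\ to the deformation of the normalization marked with the preimages of the nodes --- which is $dc$ by definition. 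If you wish to keep your cocycle argument, you must in effect reprove this push-forward compatibility; carrying out the \v{C}ech computation downstairs on $S$ (cocycles of vector fields preserving $X$, restricted to $X$) is cleaner than on $\tilde{S}$, and it makes the appeal to FKPS's theorem on $H^1(\tau)$ unnecessary.
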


\begin{proof}
The restriction of $\eps:\tilde{S}\to S$ to $C$ is finite so $R^{i}\eps_*$ vanishes for $i>0$. On the other hand one can check that $\eps_*T_C(-\sum p_i+q_i)$ is the tangent sheaf of the nodal curve $X$, see the proof of \cite[Lemma 27.6]{Ha10}. Again, if we apply $\eps_*$ to the second row in diagram (\ref{diag}), from (\ref{eq0}) and (\ref{van2}), we get (see also \cite[Prop. 4.22]{2007arXiv0707.0157F})
$$R^i\eps_*\sh{F}_C\cong R^i\eps_*\eps^*T_S=0\hbox{ for }i>0.$$
From the exact sequence
$$0\to\sh{F}_C(-E)\to\sh{F}_C\to\cal{O}_E(-1)^{\oplus 2}\to 0,$$
we can conclude that $R^i\eps_*\sh{F}_C(-E)=0$ for $i>0$. By Proposition \ref{prop1} and (\ref{van2}), we have isomorphisms
$$\eps_*\sh{F}_C(-E)\overset{\sim}{\to}\eps_*\sh{F}_C\cong T_S\langle X\rangle.$$
Therefore, the natural isomorphisms coming from the Leray Spectral Sequence sit in the diagram
\begin{displaymath}
\xymatrix{
H^1(\tilde{S},\sh{F}_C(-E))\ar[d]^{\cong}\ar[r]^{H^1(\sigma)}&H^1(C,T_C(-\sum p_i+q_i))\ar[d]^{\cong}\\
H^1(S,\eps_*\sh{F}_C(-E))\ar[r]^{H^1(\eps_*\sigma)}&H^1(T_X),
}
\end{displaymath}  
where the map at the bottom factors through a natural isomorphism $H^1(\eps_*\sh{F}_C(-E))\cong H^1(T_S\langle X\rangle)$ and $H^1$ of the restriction map $T_S\langle X\rangle\to T_X$ sending locally trivial first order deformations of the pair $(S,X)$ to nodal deformations of $X$. 
\end{proof}

Recall that, as stated in \cite{2007arXiv0707.0157F}, if $V_{g,\delta}\to\cal{V}_{g,\delta}$ is an \'etale atlas and $\sh{X}\hookrightarrow\sh{S}$ is the universal family of pairs $(X,S)$ induced by it, there is a universal embedded resolution
\begin{displaymath}
\xymatrix{
\sh{C}\ar@{^(->}[r]\ar[d]^{\rho}&\tilde{\sh{S}}\ar[d]^{\eps}\\
\sh{X}\ar@{^(->}[r]\ar[d]&\sh{S}\ar[dl]\\
V_{g,\delta}&
}
\end{displaymath}
where $\eps$ is the blow-up of $\sh{S}$ along the nodal locus $\cal{N}_\delta\subset \sh{X}\hookrightarrow\sh{S}$ and $\rho$ is fiber-wise over $V_{g,\delta}$, the normalization. Every locally trivial first order deformations of a pair $(X,S)$ induce one of the pair $(C,\tilde{S})$ parametrized by $H^1(\tilde{S},\sh{F}_C)$.

\begin{proof}[Proof of Theorem \ref{thmDT}]
By Proposition \ref{main-prop}, Corollary \ref{coro1.10}, the isomorphisms in (\ref{van2}) and the vanishing (\ref{van0}); 
$$\hbox{coker dc}\cong H^2(\tilde{S},\eps^*T_S(-C-E)).$$
Since $\eps^*X=C+2E$, by Serre duality and projection formula 
$$H^2(\tilde{S},\eps^*T_S(-C-E))\cong H^0(\tilde{S},\eps^*\Omega_{S}^1(X))\cong H^0(S,\Omega_S^1(X)).$$
By \cite[\S5.2]{2002math.....11313B}, the latter is $0$ when $g\leq 9$ or $g=11$ and isomorphic to $\cc$ when $g=10$. 

To compute the class of this divisor in $\hbox{Pic}_\qq(\cal{M}_{10-\delta,[2\delta]})$ we just need to pull back the class $\overline{\cal{K}}$ of the closure of the K3 locus in $\cal{M}_{10}$ by the boundary map $\xi:\overline{\cal{M}}_{10-\delta,2\delta}\to\overline{\cal{M}}_{10}$ and then push it down by the $S_{2\delta}$-quotient $\pi:\overline{\cal{M}}_{10-\delta,2\delta}\to\overline{\cal{M}}_{10-\delta,[2\delta]}$,
$$c_*[\cal{V}_{10,\delta}]=\frac{1}{n!}\pi_*\left(\xi^*\overline{\cal{K}}\right).$$
If we restrict it to $\cal{M}_{10-\delta,[2\delta]}$ we have our result.
\end{proof}

\begin{remark}
Take for example $g=10$ and $\delta=1$. Even though the normalization map to $\cal{M}_9$ is surjective, the map to $\cal{M}_{9,[2]}$ is divisorial, i.e., if $C$ is a general genus $9$ curve there is a codimension one cycle in the symmetric product without the diagonal $\Gamma\subset C^{[2]}\setminus\triangle$ consisting of points $p+q$ such that, after identifying them, the nodal curve $C/_{p\sim q}$ lies on a $K3$. Let $C^{[2]}\setminus\triangle$ be a general fiber of $\pi:\overline{\cal{M}}_{9,[2]}\to\overline{\cal{M}}_9$, since the complex structure of the curve along the fiber is constant, the Hodge bundle restricts to the trivial bundle and the class of $\Gamma$ in $\hbox{Pic}(C^{[2]}\setminus\triangle)$ is given by 
$$\frac{7\lambda+\psi_1+\psi_2}{2}\cdot \pi^*(\hbox{pt})=K_C+C.$$
The same argument works for $\delta\geq 1$.
\end{remark}


\section{Quadratic Differentials}
\label{quadratic}
Let $S$ be the blow-up of $\pp^2$ along $0\leq r \leq 8$ many point in general position. The surface $S$ is a del Pezzo surface and the class $-2K_S$ is ample. There is a moduli of such surfaces $\sh{P}_r$ realized as the quotient of an open $\cal{U}$ of $(\pp^2)^{r}$ by the group $PGL(3)$. The moduli space has dimension $\min\{2r-8,0\}$ and over it sits a natural space
$$\sh{B}_r=\left\{(S,C)\mid S\in\sh{P}_r\hbox{ and }C\in|-2K_S| \hbox{ smooth and irreducible} \right\}$$
with fibers over each del Pezzo surface $S\in \sh{P}_r$ open subsets of the projective space $|-2K_S|$. By Riemann-Roch and Kodaira Vanishing, since $\chi(\cal{O}_S)=1$;
$$\dim H^0\left(S,\cal{O}_S\left(-2K_S\right)\right)=\chi(\cal{O}_S)+3K_S^2=28-3r.$$
The fiber dimension of the map $\sh{B}_r\to\sh{P}_r$ is $27-3r$ and the dimension of $\sh{B}_r$ is $19-r$. On the other hand, if $C\in |-2K_S|$ is a smooth irreducible curve on $S$, the genus of $C$ satisfies
$$2g-2=C^2+K_S\cdot C=2K_S=18-2r$$
and there is a natural map
$$\begin{array}{rcl}
\psi_r:\sh{B}_r&\to&\cal{M}_{10-r}\\
(S,C)&\mapsto&[C].
\end{array}$$
\begin{proposition}
When $4\leq r\leq 7$, the map $\psi_r$ is dominant.
\end{proposition}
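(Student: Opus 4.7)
The plan is to prove dominance via a dimension count combined with surjectivity of $d\psi_r$ at a general point. The comparison
$$\dim \sh{B}_r - \dim \cal{M}_{10-r} = (19 - r) - (27 - 3r) = 2r - 8$$
is nonnegative precisely when $4 \leq r \leq 7$, so once the differential of $\psi_r$ is surjective at one smooth point, dominance of $\psi_r$ throughout the range will follow.

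For a general pair $(S, C) \in \sh{B}_r$ with $C$ smooth of genus $g = 10 - r$, I would specialize the deformation theory of pairs from Section 1.5 to the smooth case, where $T_S\langle C\rangle$ is the usual logarithmic tangent sheaf. Then $T_{(S,C)} \sh{B}_r \cong H^1(S, T_S\langle C\rangle)$ and $d\psi_r$ is induced on cohomology by the surjection in
$$0 \to T_S(-C) \to T_S\langle C\rangle \to T_C \to 0.$$
The cokernel of $d\psi_r$ embeds into $H^2(S, T_S(-C))$, and since $C \in |-2K_S|$ one has $T_S(-C) \cong T_S(2K_S)$. Surjectivity of the differential therefore reduces to the vanishing $H^2(S, T_S(2K_S)) = 0$.

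This last vanishing is the main step and the point where the hypothesis $r \geq 4$ enters. Serre duality on $S$ combined with the canonical rank-$2$ isomorphism $\Omega_S^1 \cong T_S(K_S)$ yields
$$H^2(S, T_S(2K_S)) \cong H^0(S, \Omega_S^1(-K_S))^\vee \cong H^0(S, T_S)^\vee,$$
so everything reduces to the classical vanishing $H^0(S, T_S) = 0$, equivalently to the finiteness of the automorphism group of a del Pezzo surface of degree $d = 9 - r \leq 5$; in explicit terms, the eight-dimensional space of infinitesimal automorphisms of $\pp^2$ is killed by the vanishing conditions at $r \geq 4$ general blown-up points. I expect the one subtle step to be spotting this rank-$2$ identification, which collapses what looks like a nontrivial vanishing of twisted differentials into a well-known fact about vector fields. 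As a sanity check, dominance can also be verified case by case by realizing a general genus $10 - r$ curve as a bianticanonical divisor on an explicit del Pezzo: a plane quartic as the branch curve of a DP2 ($r = 7$); a canonical genus $4$ curve on its cutting cubic surface ($r = 6$); a canonical genus $5$ curve on a DP4 cut by two of the three quadrics through it ($r = 5$); and a general genus $6$ curve on a DP5 by Mukai's theorem ($r = 4$).
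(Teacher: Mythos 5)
Your proof is correct, but it takes a genuinely different route from the paper. The paper's proof is a one-line Brill--Noether construction: since $\rho(g,2,6)=12-2g\geq 0$ for $g=10-r\leq 6$, a general curve of genus $g$ admits a plane sextic model $\Gamma\subset\pp^2$ with exactly $10-g=r$ nodes; blowing up the nodes produces $S\in\sh{P}_r$ and the proper transform of $\Gamma$ lies in $|-2K_S|=|6L-2E_1-\cdots-2E_r|$, so the general curve is exhibited directly in the image of $\psi_r$. You instead argue infinitesimally, in the spirit of Beauville's treatment of the K3 case that the paper itself uses in \S 1.5: the cokernel of $d\psi_r$ lands in $H^2(S,T_S(-C))=H^2(S,T_S(2K_S))$, which by Serre duality and the rank-two identity $\Omega^1_S\cong T_S\otimes\omega_S$ is dual to $H^0(S,T_S)$, and this vanishes for $r\geq 4$ general points since they kill all infinitesimal automorphisms of $\pp^2$ (equivalently, del Pezzo surfaces of degree $\leq 5$ have finite automorphism group). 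Both arguments are sound; the paper's is shorter and produces explicit models, while yours shows more, namely surjectivity of the differential at every pair $(S,C)$ with $C$ smooth, and makes transparent where $r\geq 4$ enters. In a full write-up you should justify the identification $T_{(S,C)}\sh{B}_r\cong H^1(S,T_S\langle C\rangle)$, i.e.\ that $\sh{B}_r$ is a complete family of pairs: any small deformation of a del Pezzo of degree $9-r\geq 2$ is again a blow-up of $\pp^2$ at $r$ points in general position, and since $h^1(\cal{O}_S)=h^2(\cal{O}_S)=0$ the class $-2K_S$ deforms with it, so the Kodaira--Spencer map of (an atlas of) $\sh{B}_r$ surjects onto $H^1(T_S\langle C\rangle)$; the count $h^1(T_S\langle C\rangle)=19-r=\dim\sh{B}_r$ confirms this. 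Two minor remarks: the inequality $2r-8\geq 0$ holds for all $r\geq 4$ (the upper bound $r\leq 7$ comes from the del Pezzo setup, not from the dimension count), and your case-by-case check (quartic as branch curve of a degree-two del Pezzo, quadric sections of the cubic surface and of the quartic del Pezzo in $\pp^4$, Mukai for genus $6$) is essentially a second constructive proof, parallel to the paper's single uniform plane-sextic model.
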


\begin{proof}
Let $[C]\in\Mg$ be a general smooth curve with $3\leq g\leq 6$. The Brill-Noether number $\rho(g,2,6)\geq0$ and the general curve $[C]\in\Mg$ has a plane nodal model $\Gamma\subset \pp^2$ of degree $6$ with $10-g$ nodes. Take $r=10-g$, $S=\hbox{Bl}_r\pp^2$ the blow-up of $\pp^2$ along the nodes, $E_1,\ldots,E_r$ the exceptional divisors and $L$ the proper transform of the line. Then the proper transform of $\Gamma$ is smooth and lies in the linear system $-2K_S=6L-2E_1-\ldots-2E_r$. 
\end{proof}

Recall that for $l(\nu)=n\geq g$, the strata of quadratic differentials $\mathcal{Q}(\nu)$ is irreducible of codimension $g$ inside $\Mgn$ if the partition $\nu$ has at least one odd entry, with the only exception of $(3,3,3,3)$ in genus four, see \S1.

\begin{lemma}
Let $\nu=(n_1,\ldots,n_{m+{g}})$ be a partition of $4g-4$ of length $m+g$ with at least one odd entry and different from $(3,3,3,3)$. The forgetful map 
$$\begin{array}{rcl}
\cal{Q}(\nu)&\to&\cal{M}_{g,m}\\
\left[C,x_1,\ldots,x_{m+g}\right]&\mapsto&\left[C,x_1,\ldots,x_m\right].
\end{array}$$
is dominant.
\end{lemma}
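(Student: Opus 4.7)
The dimensions match: $\dim \cal{Q}(\nu) = 2g-3+l(\nu) = 3g-3+m = \dim \cal{M}_{g,m}$ and under our hypotheses $\cal{Q}(\nu)$ is smooth and irreducible. Hence dominance is equivalent to exhibiting a single point of $\cal{Q}(\nu)$ at which the differential of the forgetful map is an isomorphism. Given a general $(C, x_1, \ldots, x_m) \in \cal{M}_{g,m}$, set $d = \sum_{j=m+1}^{m+g} n_j$ and $L = \omega_C^{\otimes 2}\bigl(-\sum_{i=1}^m n_i x_i\bigr)$. A point of the fiber over $(C, x_1, \ldots, x_m)$ then corresponds to a tuple $(y_1, \ldots, y_g) \in C^g$ with distinct entries disjoint from $\{x_i\}$ satisfying $\cal{O}_C\bigl(\sum_{j=1}^g n_{m+j} y_j\bigr) \cong L$.

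The plan is to study the weighted Abel--Jacobi map
\[
\phi_C : C^g \to \mathrm{Pic}^d(C), \quad (y_1, \ldots, y_g) \mapsto \cal{O}_C\Bigl(\sum_{j=1}^g n_{m+j} y_j\Bigr).
\]
Its codifferential at $(y_j)$ is the weighted evaluation $\omega \mapsto (n_{m+j}\,\omega(y_j))_j$ from $H^0(C, \omega_C)$ to $\bigoplus_j T^*_{y_j} C$, injective for $(y_j)$ generic since $g$ generic points impose independent conditions on $H^0(\omega_C)$. As a proper map between irreducible varieties of equal dimension $g$ with generically surjective differential, $\phi_C$ is surjective, so $L \in \phi_C(C^g)$. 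Let $B \subset C^g$ denote the ``bad locus'' consisting of tuples with repeated entries, entries meeting some $x_i$, or ramification points of $\phi_C$; then $B$ has codimension at least one, so $\phi_C(B)$ is a proper closed subvariety of $\mathrm{Pic}^d(C)$. I will argue that for $(C, x_1, \ldots, x_m)$ in a dense open of $\cal{M}_{g,m}$ the pair $(C, L)$ lies outside the relative version of $\phi_C(B)$ inside the universal Jacobian $\cal{J}^d \to \cal{M}_g$, by showing that the natural map $\beta : \cal{M}_{g,m} \to \cal{J}^d$, $(C, x_1, \ldots, x_m) \mapsto (C, L)$, is not contained in it.

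Granted a preimage $(y_1, \ldots, y_g) \in \phi_C^{-1}(L) \setminus B$, the point $(C, x_1, \ldots, x_m, y_1, \ldots, y_g) \in \cal{Q}(\nu)$ is smooth, and a direct tangent-space computation identifies the kernel of the differential of the forgetful map with $\ker d\phi_C$ at $(y_j)$, which vanishes by the étaleness assumption. Hence the forgetful map is étale at this point and, by the dimension count, dominant. The main obstacle is the non-containment claim above in the regime $m<g$: here $\beta(\cal{M}_{g,m})$ has dimension $3g-3+m<4g-3=\dim \cal{J}^d$, so genericity of $(C,L)$ is not automatic. I expect to handle this by exhibiting a specific test curve $C$ and marking $(x_i)$ for which $L$ can be verified to avoid $\phi_C(B)$, exploiting the non-trivial dependence of $\omega_C$ on $C$ combined with the $m$ free parameters $(x_i)$.
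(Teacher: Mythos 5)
Your framework---the weighted Abel--Jacobi map $\phi_C:C^g\to\Pic^d(C)$, its surjectivity via properness plus generic injectivity of the codifferential, and the reduction of the lemma to showing that $L=\omega_C^{\otimes 2}\bigl(-\sum_{i\le m}n_ix_i\bigr)$ avoids the image $\phi_C(B)$ of the bad locus---is the same circle of ideas as the paper's proof, which packages it via the universal Jacobian $\sh{J}_m^{4g-4}$, the $2$-canonical section and the global Abel--Jacobi map $\sigma_\nu$. The problem is that your write-up stops exactly where the lemma has content. Everything you actually establish (the dimension count, smoothness, surjectivity of $\phi_C$, the identification of the kernel of the differential of the forgetful map with $\ker d\phi_C$) is standard; the substance of the lemma is precisely the avoidance statement that for general $[C,x_1,\ldots,x_m]$ the particular bundle $L$ does not lie in $\phi_C(B)$. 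For $m<g$ you explicitly defer this (``I expect to handle this by exhibiting a specific test curve\dots'') without producing a test curve or any indication of how avoidance would be verified on one. This is not a routine verification: it requires controlling the position of $2K_C-\sum n_ix_i$ relative to all the merged weighted Abel--Jacobi images $\phi'(C^{g-1})\subset\Pic^d(C)$ and the loci where some $y_j$ collides with an $x_i$---subvarieties of dimension $g-1$ that are not excluded by dimension alone from containing the $m$-dimensional image of $x\mapsto L$. So the proposal has a genuine gap at its decisive step.

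Two further points of comparison. First, the paper closes this step not on a fixed Jacobian but relatively over moduli: it bounds the locus of $[C,x]\in\cal{M}_{g,m}$ for which $\omega_C^{\otimes 2}\otimes L_{[C,x]}^{-1}$ can only be supported on fewer than $g$ points, arguing it has codimension at least one because failure forces $[C,x]$ into the image of a smaller-dimensional space of degenerate (collided) configurations; this trades your fixed-curve genericity problem for a dimension count in $\cal{M}_{g,m}$, which is what makes the argument go through. Second, your insistence on a point where the differential is an isomorphism is more than you need: for dominance it suffices that the fiber over every $[C,x]$ in a dense open set be nonempty, so the ramification part of your bad locus can simply be dropped. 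But this simplification does not remove the gap, since surjectivity of $\phi_C$ only yields solutions that may have repeated points or points among the $x_i$, and excluding that for the specific bundle $L$ is again exactly the missing claim.
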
  
\begin{proof}
The diagram
\begin{displaymath}
\label{cartesiandiagram}
\xymatrix{
\cal{Q}(\nu)\ar[r]^{i}\ar[d]^\pi&\cal{M}_{g,m+g}\ar[d]^{\sigma_\nu}\\
\mathcal{M}_{g,m}\ar[r]^{c}&\sh{J}_m^{2\cdot(2g-2)}}
\end{displaymath}
is cartesian, where $\sh{J}_m^{4g-4}$ is the universal jacobian of degree $4g-4$ over $\mathcal{M}_{g,m}$, the map $c$ is the $2$-canonical section and $\sigma_\nu$ is the global Abel-Jacobi map given by 
$$\sigma_\nu:\left[C,x_1,\ldots,x_{m+g}\right]\mapsto\left[C,x_1,\ldots,x_m,\cal{O}_C\left(\sum_{i=1}^{m+g}n_ix_i\right)\right].$$
For a smooth curve with marked points $[C,x_1,\ldots,x_m]\in\cal{M}_{g,m}$ we fix 
$$L_{[C,x]}:=\cal{O}_C\left(\sum_{i=1}^m n_ix_i\right).$$
For dimension reasons, the locus of curves $[C,x]\in\cal{M}_{g,m}$ such that $\omega^{\otimes 2}-L_{[C,x]}$ is supported on less than $g$ points, is of codimension at least one. Then if $[C,x]$ is general in $\cal{M}_{g,m}$, the Abel-Jacobi map $\sigma_\nu$ restricted to the fiber of $\sh{J}_m^{4g-4}\to\cal{M}_{g,m}$ over $[C,x]$ if given by 
$$\begin{array}{rcl}
C^{\times g}\setminus\triangle&\to&\Pic^{4g-4}(C)\\
(x_{m+1},\ldots,x_{m+g})&\mapsto& L_{[C,x]}+\cal{O}_C\left(n_{m+1}x_{m+1}+\ldots+n_{m+g}x_{m+g}\right)
\end{array}$$ 
and contains $\omega^{\otimes 2}$. Therefore, the Abel-Jacobi map $\sigma_\nu$ dominates the image of the $2$-canonical section $c$.
\end{proof}

\begin{proof}[Proof of Theorem \ref{quad.diff.}]
Let $g\leq 6$ and $\nu$ a partition of $4g-4$ with at least one odd entry, length $l(\nu)\geq g$ and different from $(3,3,3,3)$. Let $[C,x_1,\ldots,x_n]\in \cal{Q}(\nu)$ be a general point on the strata, then $C$ is general in moduli so we can assume that it lies on the image of $\psi_r$ with $r=10-g$. As in Section \ref{main_argument}, the linear system $|-2K_S|$ embeds $S$ in $\pp^{3g-3}$ and realizes $C$ as an hyperplane section 
$$C=S\cap H\subset \pp^{3g-3}.$$
The restriction of $-2K_S$ to $C$ is $2K_C$. Thus, the map $S\hookrightarrow \pp^{3g-3}$ restricted to $C$ is the $2$-canonical embedding and since $\sum n_ix_i\in \hbox{Div}(C)$ is a quadratic differential, there exists a codimension $2$ plane $\Lambda_\mu\subset \pp^{3g-3}$, with 
$$\Lambda_\mu\cdot S=\sum n_ix_i.$$
Again, let $P$ be the pencil of hyperplanes $H\in (\pp^{3g-3})^{\vee}$ containing $\Lambda_\mu$. The points $x_1,\ldots,x_n$ lie on the base locus of this pencil and  there is a rational map 
$$\begin{array}{rcl}
P&\dashrightarrow &\cal{Q}(\nu)\\
H'&\mapsto&\left[S\cap H',x_1,\ldots,x_n\right].
\end{array}$$
It is left to prove that the map is non-trivial. Let $\pi:\sh{U}\to \pp^1$ be the family of curves induced by the pencil $P$, $Z$ the base locus of $P$, and $\eps:\sh{U}\to S\to\pp^2$ the composition of the blow-up of $S$ at $Z$ with the blow-up map $\eps:S\to\pp^2$. If every fiber of $\pi$ is smooth, then the topological Euler characteristic of $\sh{U}$ is 
$$\chi(\sh{U},\zz)=\chi(\pp^1,\zz)\cdot\chi(\pi^{-1}(\hbox{point}),\zz)=2\cdot(2-2g).$$
But $\sh{U}$ is the composition of the blow-up of $\pp^2$ at $r$ points together with the blow-up at $|Z|$ points on $S$. Thus, 
$$\chi(\sh{U},\zz)=3+r+|Z|\geq 3,$$
which is a contradiction. It follows that $\pi:\sh{U}\to \pp^1$ must have singular fibers. This proves non-isotriviality. Fibers of $\pi$ might still be curves isomorphic to $C$ with a rational tail attached, in which case the moduli map induced by the pencil is still trivial. This cannot happen: indeed, if we see the pencil as a $\pp^1$-family of $r$-nodal sextics in $\pp^2$, if $C\sim R+C'$ where $R$ is an irreducible rational tail, then $R$ is a line or a conic on $\pp^2$ in which case the residual curve $C'$ drops in genus. Alternatively, the same argument as in the proof of Lemma \ref{nontrivial} applies. 
\end{proof}


\section{Unirationality in Small Genus}

Recall that a variety is unirational if it is dominated by a rational variety. As before, $\mu=(m_1,\ldots,m_n)$ is an holomorphic partition of $2g-2$, with $g\geq3$. We will assume that the length of the partition is at most $g-1$ and that $\Hgmu$ is connected. A similar argument as in \cite[Thm 3.1]{2010arXiv1004.0278F} can be used to obtain unirationality for $3\leq g\leq6$. The strategy is to construct a projective bundle $\cal{P}_g$ over a rational variety $\Sigma$ that dominates $\Hgmubar$.\\

For $3\leq g \leq 6$, we have $\rho(g,2,6)\geq 0$. By choosing $[C,y_1,\ldots,y_{g-1}]\in \mathcal{M}_{g,g-1}$ and $A\in \mathrm{G}_6^2(C)$ general, we can assume that the map $\phi_A:C\to \Gamma\subset \pp^2$ realizes $C$ as a $(10-g)$-nodal sextic and the marked points $y_1,\ldots,y_{g-1}$ are disjoint from the preimages of the nodes $\phi_A^{-1}\left(\mathrm{Sing}(\Gamma)\right)$. Consider the following diagram of rational maps

\begin{displaymath}
\xymatrix{
&\cal{P}_g\ar@{-->}[dr]^{\pi_2}\ar@{-->}[dl]_{\nu_g}&\\
\cal{M}_{g,n}&&\Sigma,
}
\end{displaymath}
where $\Sigma\subset |\mathcal{O}_{\pp^2}(3)|\times (\pp^2)^9$ is defined as
$$\Sigma:=\left\{(E,x_1,\ldots,x_\delta,y_1,\ldots,y_{g-1})\in |\mathcal{O}_{\pp^2}(3)|\times (\pp^2)^9\mid x_1,\ldots,y_{g-1}\in E \right\}$$
and the fiber of $\pi_2$ over a general point $(E,\bar{x},\bar{y})\in\Sigma$ is the linear space of plane sextics $\Gamma$, nodal at $x_1,\ldots,x_\delta$ and with $\mu$ contact at the points $y_1,\ldots,y_{g-1}$ with the cubic $E$. In other words,  
$$\pi_2^{-1}(E,\bar{x},\bar{y}):=\left\{\Gamma\in |\mathcal{O}_{\pp^2}(6)|\left|\hspace{0,1cm}\Gamma\hbox{ is nodal at }\bar{x}\hbox{ and }\Gamma\cdot E=\sum_{1}^{g-1}m_iy_i+2(x_1+\ldots+x_{\delta})\right.\right\}.$$
If the partition $\mu$ has length less than $g-1$, we complete it with zeros so that the length is $g-1$. \\

One can see that the map induced by the projection $\Sigma\to(\pp^2)^{9}$ is birational. The map $\nu_g$ sends $(\Gamma,E,\bar{x},\bar{y})$ to $[C,\tilde{y}]$, where $C$ is the normalization of $\Gamma$ and $\tilde{y}$ is obtained by omitting from $(y_1,\ldots,y_{g-1})$ the terms $y_i$ where $m_i=0$.

\begin{proposition}
\label{prop3.1}
The map $\pi_2$ is dominant.
\end{proposition}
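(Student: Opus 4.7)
The plan is to establish dominance of $\pi_2$ by reverse-engineering a point in a general fiber from a general point of $\Hgmu$, and then confirming by a dimension count. I start from a general $[C, x_1, \ldots, x_n] \in \Hgmu$. Since $\rho(g, 2, 6) = 12 - 2g \geq 0$ in the range $3 \leq g \leq 6$ and $\Hgmu$ is irreducible (hence generically not contained in the hyperelliptic locus), the curve $C$ admits a base-point-free $A \in G^2_6(C)$. The induced morphism $\phi_A : C \to \pp^2$ realizes $C$ as an irreducible plane sextic $\Gamma$ with exactly $\delta = 10 - g$ ordinary nodes; for a generic choice of $A$ and of the marked points, the images $y_i := \phi_A(x_i)$ lie in the smooth locus of $\Gamma$ and avoid the nodes, which I collect into $\bar{x}$.

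By the residue/adjunction formula for plane sextics, canonical divisors on the normalization $C$ correspond bijectively to cubics in $\pp^2$ through the $\delta$ nodes: a cubic $E$ through the nodes yields the canonical divisor $E \cdot \Gamma - 2 \sum x_j$ on $C$. Translating the canonical divisor $\sum m_i x_i \in |K_C|$ through this bijection produces a unique cubic $E \subset \pp^2$ with $E \cdot \Gamma = \sum m_i y_i + 2 \sum x_j$. Adjoining $g - 1 - n$ arbitrary marked points on $E$ to complete $\bar{y}$, we obtain a point $(\Gamma, E, \bar{x}, \bar{y}) \in \cal{P}_g$ projecting to $(E, \bar{x}, \bar{y}) \in \Sigma$.

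As $[C, \bar{x}]$ varies in $\Hgmu$, $A$ in $G^2_6(C)$, the embedding modulo $PGL(3)$, and the padded points on $E$, the constructed family in $\cal{P}_g$ has total dimension $(2g - 2 + n) + (12 - 2g) + 8 + (g - 1 - n) = g + 17$, matching $\dim \cal{P}_g$. Since $\dim \Sigma = 18$ and the fiber of $\pi_2$ is a linear system cut out by the $3\delta + \sum m_i = 28 - g$ linear conditions of nodality and prescribed contact on the $\pp^{27}$ of plane sextics --- of expected projective dimension $g - 1$ --- the image of $\pi_2$ has dimension $g + 17 - (g - 1) = 18$, proving dominance. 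The main obstacle is to verify that these $28 - g$ linear conditions attain their maximal rank at a generic fiber, equivalently that the constructed family projects onto all of $\Sigma$ rather than a proper subvariety; this can be settled by direct inspection of the explicit tuple constructed above combined with upper-semicontinuity of linear system dimensions.
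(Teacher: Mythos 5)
Your construction of points of $\cal{P}_g$ starting from a general $[C,x_1,\ldots,x_n]\in\Hgmu$ is fine (it is essentially the argument the paper uses later to prove dominance of $\nu_g$), but the concluding dimension count does not prove dominance of $\pi_2$, and the step you set aside is exactly the mathematical content of the proposition. Knowing that your family has dimension $g+17$ and that the fibers of $\pi_2$ have \emph{expected} dimension $g-1$ only gives $\dim \pi_2(\cal{P}_g)\le 18$: for a linear system cut out by linear conditions, the expected dimension is a \emph{lower} bound for the actual dimension, so the subtraction $g+17-(g-1)=18$ is only valid if you prove the reverse inequality, namely that the $28-g$ conditions (double points at $\bar{x}$, $\mu$-contact with $E$ at $\bar{y}$) are independent over a general point in the image of your construction, so that the fibers there have dimension at most $g-1$. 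This is precisely what you call ``the main obstacle,'' and ``direct inspection of the explicit tuple'' is not an argument; moreover, upper semicontinuity points the wrong way for your purposes: it would let you propagate independence from a configuration where it has been verified to the general point of $\Sigma$, but you never verify it anywhere, and what your count actually needs is control at the (a priori special) configurations $(E,\bar{x},\bar{y})$ arising from your construction. Note also that bare nonemptiness of the linear system is automatic ($28-g\le 27$ linear conditions on $\pp^{27}$); the substance of the proposition is that over a \emph{general} $(E,\bar{x},\bar{y})$ the system has the right dimension and contains members that are genuinely nodal at $\bar{x}$ -- neither of which your count supplies. A secondary gap: when $l(\mu)=n<g-1$ the underlying curve of a general point of $\Hgmu$ is \emph{not} general in $\Mg$ (the image of the forgetful map has codimension $g-1-n$), so your assertions that $G^2_6(C)$ has dimension $12-2g$ and that a general $A$ yields a $\delta$-nodal model with marked points off the nodes require justification for these special curves.

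By contrast, the paper's proof avoids any reliance on $\Hgmu$ and works directly over a general $(E,\bar{x},\bar{y})\in\Sigma$: it blows up $\pp^2$ at the $\delta\le 7$ points $\bar{x}$, uses $h^1(X,-K_X)=0$ to get surjectivity of $H^0(\cal{O}_X(6L-2F))\to H^0(\cal{O}_E(6L-2F))$ and hence the correct dimension of the system $|\mathcal{K}(\mu)|$, and then establishes that a general member is nodal by specializing all the $y_i$ to a single point and invoking Caporaso--Harris nonemptiness of generalized Severi varieties of nodal sextics with prescribed tangency to a line. If you want to salvage your route, you must replace the appeal to ``maximal rank'' by an actual vanishing statement (the analogue of $h^1(-K_X)=0$) valid at the configurations produced by your construction; at that point you would essentially be redoing the first half of the paper's proof, with the added burden of controlling the position of the nodes of a plane model of a possibly non-generic curve.
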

\begin{proof}
Let $(E,\bar{x},\bar{y})$ be a general point in $\Sigma$ and $\eps:X\to \pp^2$ be the blow-up of $\pp^2$ at $x_1+\ldots+x_{\delta}$ with $F$ the exceptional divisor and $L$ the pull back of the line. Let $\mathcal{K}(\mu)$ be the kernel of the composition 
$$\mathcal{O}_X(6L-2F)\to\mathcal{O}_E(6L-2F)\to\mathcal{O}_E(6L-2F)\mid_{\sum m_iy_i}.$$
Since $E=3L-F=-K_X$ and $4\leq \delta\leq 7$, we obtain $h^1(-K_X)=0$. From this one deduces that $H^0$ of the first map is surjective. If the general element of $\pp(H^0(X,\mathcal{K}(\mu)))$ is nodal, then 
$$\dim \pi_2^{-1}(E,\bar{x},\bar{y})= h^0(\mathcal{K}(\mu))-1\geq 27-3\delta-2(g-1)=9-\delta.$$
By specialization, it is enough to show that the general element of the linear system $|\mathcal{K}(\mu)|$ is nodal when the points coincide, that is, $y_1=\ldots=y_{g-1}=y$. In this case, the fiber of $\pi_{2}$ consists of $\delta$-nodal sextics, with nodes at $\bar{x}$ and intersecting the tangent line $\ell_{y}$ of $E$ at $y$ with order $2g-2$. This is a \textit{generalized Severi variety}, and by \cite[Prop. 2.1]{CaHa98}, the space of $\delta$-nodal sextics with such prescribed tangency with a given line in non-empty, in particular $|\mathcal{K}(2g-2)|$ contains nodal curves and so does $|\mathcal{K}(\mu)|$. 

\end{proof}

Now we can give a proof of unirationality in the established range.

\begin{proof}[Proof of the unirationality column in Theorem \ref{thm}]
Let $3\leq g\leq 6$ and $\delta=10-g$. We need to prove that $\nu_g$ dominates $\Hgmu$. Let $(\Gamma,E,\bar{x},\bar{y})\in \mathcal{P}_g$, with $\Gamma\cdot E=\sum m_iy_i+2(x_1+\ldots+x_\delta)$ and $\nu:C\to \Gamma$ the normalization. Then
$$\cal{O}_C\left(\sum m_iy_i\right)\cong\nu^*\cal{O}_{\Gamma}(3)\left(-\nu^{-1}(\bar{x})\right)$$
and by adjunction 
$$\sum m_iy_i\sim K_C.$$
Thus, the image of $\nu_g$ lies in $\Hgmu$. Now we prove dominance. Let $[C,\tilde{y}]$ be a general point in $\Hgmu$. By assumption on $g$, we can choose $A\in G^2_6(C)$ general so that the associated map $\phi_A:C\to\pp^2$ realizes $C$ as a $\delta$-nodal curve and by adjunction
$$h^0\left(C,\nu^*\cal{O}_{\Gamma}(3)\left(-\nu^{-1}(\bar{x})-\sum_1^{n} m_iy_i\right)\right)=1.$$
Thus, there exists a cubic $E\in|\mathcal{O}_{\pp^2}(3)|$ such that 
$$\Gamma\cdot E=\sum m_iy_i+2(x_1+\ldots+x_{\delta}).$$
By adding with multiplicity zero any $g-1-n$ points on $E$, we have that the image of 
$$(\phi_A(C), x_1,\ldots,x_{\delta},\bar{y})\in \mathcal{P}_g$$ 
is exactly $[C,\tilde{y}]\in \Hgmu$.
\end{proof}

\section{Non-irreducible Strata}

Let $\mu=(2,\ldots,2)$ be a partition of $2g-2$. There is a natural map from $\Hgmu$ to the space of spin curves, which splits into two connected components depending on the parity of the theta characteristic,
$$
\begin{array}{rcl}
\pi:\Hgmu&\to&\sh{S}_g^+\coprod\sh{S}_g^{-}\\
\left[C,x_1,\ldots,x_{g-1}\right]&\mapsto&\left[C,\cal{O}_{C}\left(x_1+\ldots+x_{g-1}\right)\right].
\end{array}
$$

A general point $[C,\nu]\in\sh{S}_g^{+}$ satisfies $h^0(\nu)=0$ and the condition 
$h^0(\nu)\geq 2$ is divisorial. The divisor $x_1+\ldots+x_{g-1}\in\hbox{Div}(C)$ is effective and for a general point on the image of $\cal{H}^+\to\sh{S}_g^+$, we have that $h^{0}(C,x_1+\ldots+x_{g-1})=2$. See \cite[Thm. 0.2]{2008arXiv0805.2424F}. Thus, $\overline{\cal{H}}^+/S_{g-1}$ is a $\pp^1$-bundle over an open subset of a divisor $Z$ in $\sh{S}_g^+$. In particular, for every genus, through a general point of $\overline{\cal{H}}^+/S_{g-1}$ passes a rational curve.

\subsection{Hyperelliptic Strata.}

Let $\sh{H}_{g,1}\subset \cal{M}_{g,1}$ be the space of pairs $[C,p]$, where $C$ is an hyperelliptic curve and $p\in C$ is a Weierstrass point on it. The locus $\sh{H}_{g,1}$ can be defined as the fiber product over $\Mg$ of the Weierstrass divisor 
$$\sh{W}:=\left\{[C,p]\in\mathcal{M}_{g,1}\mid p\hbox{ is a Weierstrass point on $C$} \right\}\subset \mathcal{M}_{g,1}$$ 
and the hyperelliptic locus $\hbox{Hyp}_{g,1}\subset\cal{M}_{g,1}$
$$\sh{H}_{g,1}=\hbox{Hyp}_{g,1}\times_{\Mg}\sh{W}.$$

Recall that the space $\hbox{Hyp}_g\subset\Mg$ is birational to $\cal{M}_{0,2g+2}\big/S_{2g+2}$. In particular there is a dominant map 
$$\cal{M}_{0,2g+2}\to\sh{H}_{g,1}$$
sending the tuple of points $(p_1,\ldots,p_{2g+2})$ on $\pp^1$ to the unique double cover ramified over $p_1+\ldots+p_{2g+2}$ 
$$f:C\to\pp^1$$
together with the Weierstrass point $f^{-1}(p_1)$. This gives us unirationality of $\sh{H}_{g,1}$. On the other hand, it is easy to see that $p$ is a Weierstrass point on a hyperelliptic curve $C$ if and only if $(2g-2)p\sim K_C$, i.e., 
$$\sh{H}_{g,1}=\cal{H}_g^{hyp}(2g-2).$$

\begin{bibsection}[References]
\begin{biblist}[]


\bib{ACG2}{book}{
   author={{Arbarello}, E.},
   author={{Cornalba}, M},
   author={{Griffiths}, P. A.},
   title={Geometry of algebraic curves. Vol. II},
   series={Grundlehren der Mathematischen Wissenschaften},
   number={267},
   publisher={Springer-Verlag, New York},
   date={2011},
}


\bib{2016arXiv160408834B}{article}{
   author = {{Bainbridge}, M.},
   author = {{Chen}, D.},
   author = {{Gendron}, Q.},
   author = {{Grushevsky}, S.},
   author = {{M\"{o}ller}, M.},
   label = {BCGGM16}
    title = {Compactification of strata of abelian differentials, arXiv: 1604.08834.},
    journal = {Duke Mathematical Journal (to appear)},
}

\bib{2002math.....11313B}{article}{
   author = {{Beauville}, A.},
    title = {Fano threefolds and $K3$ surfaces},
  journal = {Proceedings of the Fano Conference},
     year = {2004},
     pages = {175-184},
     publisher = {Univ. di Torino},
}


\bib{CaHa98}{article}{
year={1998},
journal={Inventiones mathematicae},
volume={131},
title={Counting plane curves of any genus},
author={{Caporaso}, L.},
author={{Harris}, J.},
pages={345--392},
}

\bib{Chen99}{article}{
year={1999},
journal={Journal of Algebraic Geometry},
volume={8},
title={Rational curves on K3 surfaces},
author={X. Chen},
pages={245--278},
}

\bib{Chen16}{article}{
   author = {X. Chen},
    title = {Nodal curves on K3 surfaces, arXiv: 1611.07423},
}

\bib{CM14}{article}{
   author = {{Chen}, D.},
   author = {{M\"{o}ller}, M.},
    title = {Quadratic differentials in low genus: exceptional and non-varying strata},
    journal = {Annales Scientifiques de l'\'Ecole Normale Sup\'erieure},
    volume = {47},
    year = {2014}, 
    number = {2}, 
    pages = {309--369},
}

\bib{CilDed2012}{article}{
year={2012},
month ={Aug.},
journal={Mathematische Zeitschrift},
volume={271},
issue ={3},
title={On universal Severi varieties of low genus $\hbox{K3}$ surfaces},
author={{Ciliberto}, C.},
author = {{Dedieu}, T.},
pages={953--960},
}

\bib{CFGK17}{article}{
author={{Ciliberto}, C.},
author={{Flamini},F.},
author={{Galati}, C.},
author={{Knutsen}, A. L.},
title={Moduli of nodal curves on K3 surfaces},
journal={Advances in Mathematics},
volume={309},
year={2017},
pages={624-654},
}

\bib{CLM93}{article}{
author={{Ciliberto}, C.},
author={{Lopez}, A.},
author={{Miranda}, R.},
title={Projective degenerations of K3 surfaces, Gaussian maps and Fano threefolds},
journal={Inventiones mathematicae},
volume={114},
year={1993},
pages={641-667},
}

\bib{ClKo88}{article}{
year={1988},
journal={Ast\'{e}risque, Soc. Math. France},
volume={166},
title={Higher-dimensional complex geometry},
author={{Clemens}, H.},
author = {{Koll\'{a}r}, J.},
}


\bib{De01}{book}{
   author={{Debarre}, O.},
   title={Higher-dimensional algebraic geometry},
   series={Universitext},
   publisher={Springer-Verlag, New York},
   date={2001},
}

\bib{StevenDiazTangent}{article}{
   author = {{Diaz}, S.},
    title = {Tangent Spaces in Moduli via Deformations with Applications to Weierstrass Points},
  journal = {Duke Mathematical Journal},
     year = {1984},
    month = {Dec.},
   volume = {51},
   number = {4},
    pages = {905-922},
}


\bib{Eisenbud1987}{article}{
author={{Eisenbud}, D.},
author = {{Harris}, J.},
title={The Kodaira dimension of the moduli space of curves of genus $g\geq23$},
journal={Inventiones mathematicae},
year={1987},
volume={90},
number={2},
pages={359--387},
issn={1432-1297},
}


\bib{2008arXiv0805.2424F}{article}{
   author = {{Farkas}, G.},
    title = {The birational type of the moduli space of even spin curves},
  journal = {Advances in Mathematics},
  volume = {223},
  issue = {2},
     year = {2010},
    month = {Jan.},
   pages = {433-443},
}

\bib{2015arXiv150807940F}{article}{
   author = {{Farkas}, G.},
   author = {{Pandharipande}, R.},
    title = {The moduli space of twisted canonical divisors, arXiv: 1508.07940},
   journal = {Journal of the Institute of Mathematics of Jussieu (to appear)}, 
   publisher = {Cambridge University Press},
   year = {2016}
}

\bib{FarkasPopa}{article}{
year={2005},
journal={Journal of Algebraic Geometry},
volume={14},
title={Effective divisors on $\overline{M}_g$, curves on $\hbox{K3}$ surfaces and the slope conjecture},
author={{Farkas}, G.},
author = {{Popa}, M.},
pages={241-267},
}

\bib{2010arXiv1004.0278F}{article}{
   author = {{Farkas}, G.},
   author = {{Verra}, A.},
    title = {The geometry of the moduli space of odd spin curves},
  journal = {Annals of Mathematics},
     year = {2014},
     volume = {180},
     pages = {927-970},
    issue = {3},
}

\bib{2000math......4130F}{article}{
   author = {{Flamini}, F.},
    title = {Some results of regularity for Severi varieties of projective surfaces},
  journal = {Communications in Algebra},
  volume = {29},
     year = {2001},
    pages = {2297-2311},
}

\bib{2007arXiv0707.0157F}{article}{
   author = {{Flamini}, F.},
   author = {{Knutsen}, A. L.},
   author = {{Pacienza}, G.},
   author = {{Sernesi}, E.},
    title = {Nodal curves with general moduli on $K3$ surfaces},
  journal = {Communications in Algebra},
  volume = {36},
  issue = {1},
     year = {2008},
    pages = {3955-3971},
}


\bib{2015arXiv150303338G}{article}{
   author = {{Gendron}, Q.},
    title = {The Deligne-Mumford and the Incidence Variety Compactifications of the Strata of $\Omega{\mathcal{M}}_g$, arXiv: 1503.03338},
}

\bib{Griffiths80onth}{article}{
    author = {{Griffiths}, P.}, 
    author = {{Harris}, J.},
    title = {On the variety of special linear systems on a general algebraic curve},
    journal = {Duke Mathematical Journal},
    year = {1980},
    pages = {233-272},
}


\bib{HaMo90}{article}{
    author = {{Harris}, J.}, 
    author = {{Morrison}, I.},
    title = {Slopes of effective divisors on the moduli space of curves},
    journal = {Inventiones mathematicae},
    year = {1990},
    volume = {99},
    pages = {321-355},
}

\bib{HMKoddimMg}{article}{
year={1982},
issn={0020-9910},
journal={Inventiones mathematicae},
volume={67},
number={1},
title={On the Kodaira dimension of the moduli space of curves},
publisher={Springer-Verlag},
author={{Harris}, J.}
author = {{Mumford}, D.},
pages={23-86},
}

\bib{Ha10}{book}{
   author={{Hartshorne}, R.},
   title={Deformation Theory},
   series={Graduate Texts in Mathematics},
   number={257},
   publisher={Springer, New York},
   date={2010},
}

\bib{Hi89}{article}{
  author={{Hirschowitz}, A.},
  title = { Une conjecture pour la cohomologie des diviseur sur les surfaces rationnelles g\'{e}n\'{e}riques },
  journal = { J. reine angew. Math. },
  year = {1989},
  issue = {397 },
  pages = { 208-213 },
}


\bib{Ide}{article}{
   author = {{Ide}, M.},
    title = {Every curve of genus not greater than eight lies on a $K3$ surface},
  journal = {Nagoya Mathematical Journal},
     year = {2008},
   volume = {190},
    pages = {183-197},
}



\bib{Ke15}{article}{
   author = {{Kemeny}, M.},
    title = {The moduli of singular curves on K3 surfaces},
  journal = {Journal de Math\'ematiques Pures et Appliqu\'ees},
     year = {2015},
   volume = {104},
   issue = {5}
    pages = {882-920},
}

\bib{2003InMat.153..631K}{article}{
   author = {{Kontsevich}, M.},
   author = {{Zorich}, A.},
    title = {Connected components of the moduli spaces of Abelian differentials with prescribed singularities},
  journal = {Inventiones mathematicae},
     year = {2003},
    month = {Sep.},
   volume = {153},
    pages = {631-678},
}


\bib{2002math.....10099L}{article}{
   author = {{Lanneau}, E.},
    title = {Hyperelliptic Components of the Moduli Spaces of Quadratic Differentials with Prescribed Singularities},
   journal ={Commentarii Mathematici Helvetici},
   month ={Nov.},
   year ={2002},
   volume ={79},
   number ={4},
}


\bib{MaMu64}{article}{
author = {{Matsusaka}, T.},
author = {{Mumford}, D.},
    title = {Two fundamental theorems on deformations of polarized
varieties},
  journal = {American Journal of Mathematics},
     year = {1964},
   volume = {86},
   issue = {3},
    pages = {668-684},
}

\bib{MoMu83}{article}{
   author = {{Mori}, S},
   author = {{Mukai}, S.},
    title = {The uniruledness of the moduli space of curves of genus $11$},
  journal = {Algebraic Geometry, Proc. Tokyo/Kyoto, Lecture Notes in Math.},
     year = {1983},
   volume = {1016},
    pages = {334-353},
}

\bib{2MukaiK3}{article}{
   author = {{Mukai}, S.},
    title = {Curves, $K3$ surfaces and Fano 3-folds of genus $\leq 10$},
  journal = {Algebraic Geometry and Commutative Algebra},
     year = {1988},
   volume = {1},
   address = {Kinokuniya, Tokyo},
    pages = {357-377},
}

\bib{4MukaiK3}{article}{
   author = {{Mukai}, S.},
    title = {Fano 3-folds},
  journal = {London Math. Soc. Lecture Notes Ser.},
  publisher={Cambridge Univ. Press},
     year = {1992},
   volume = {179},
    pages = {255-263},
}

\bib{1MukaiK3}{article}{
   author = {{Mukai}, S.},
    title = {Curves and $K3$ surfaces of genus eleven},
  journal = {Moduli of vector bundles, Lecture Notes in Pure and Appl. Math.},
     year = {1996},
   volume = {179},
    pages = {189-197},
}

\bib{3MukaiK3}{article}{
   author = {{Mukai}, S.},
    title = {Curves and Symmetric Spaces, II},
  journal = {Annals of Mathematics},
     year = {2010},
   volume = {172},
    pages = {1539-1558},
    issue = {3},
}

\bib{2015arXiv150903648M}{article}{
   author = {{Mullane}, S.},
    title = {Divisorial strata of abelian differentials},
  journal = {International Mathematics Research Notices},
      year = {2017},
  volume = {6},
  pages = {1717--1748},
}

\bib{Mumf1971}{article}{
author={{Mumford}, D.},
title={Theta characteristics of an algebraic curve},
journal={Annales Scientifiques de l'\'Ecole Normale Sup\'erieure},
year={1971},
volume={4},
number={2},
pages={181-192},
}



\bib{2003math......9217P}{article}{
    author = {{Polishchuk}, A.},
    title = {Moduli spaces of curves with effective r-spin structures},
    journal = {In Gromov Witten theory of spin curves and orbifolds, volume 403 of Contemporary Mathematics},
    year = {2006},
    pages = {1-20},
    publisher = {American Mathematical Society}
}



\bib{ypgR}{article}{
   author = {{Reid}, M.},
    title = {Young Person's Guide to Canonical Singularities},
    year = {1985},
    publisher = {Proc. Sympos. Pure Math},
}

\bib{1996alg.geom..2006R}{article}{
   author = {{Reid}, M.},
    title = {Chapters on algebraic surfaces},
    journal ={Complex algebraic varieties, J. Koll\'ar (Ed.)},
    series ={IAS/Park City lecture notes},
    number ={3},
    publisher ={American Mathematical Society},
    pages ={161-219},
    year ={1997},
}

\bib{Antonio1961}{article}{
author={{Rosina Bellino}, A.},
title={Sulla classificazione delle curve algebriche sghembe in base alla determinazione di tutti i multilateri sghembi connessi per ogni ordine e genere},
journal={Annali dell'Universit{\`a} di Ferrara},
year={1961},
volume={10},
number={1},
pages={69-107},
}


\bib{gralK3}{article}{
year={1974},
journal={American Journal of Mathematics},
volume={96},
number={4},
title={Projective Models of $\hbox{K3}$ Surfaces},
author={{Saint-Donat}, B.},
pages={602-639},
}

\bib{sernesi2006deformations}{book}{
   author={{Sernesi}, E.},
   title={Deformations of algebraic schemes},
   series={Grundlehren der Mathematischen Wissenschaften},
   number={334},
   publisher={Springer-Verlag, New York},
   date={2006},
}

\bib{johannesS}{article}{
   author = {{Schmitt}, J.},
    title = {Dimension theory of the moduli space of twisted $k$-differentials, arXiv: 1607.08429},
}


\bib{nodalK3}{article}{
year={1982},
journal={Mathematische Annalen},
volume={260},
title={Families of curves with nodes on $\hbox{K3}$ surfaces},
author={{Tannenbaum}, A.},
pages={239-253},
}



\bib{Veech1990}{article}{
author={{Veech}, W. A.},
title={Moduli spaces of quadratic differentials},
journal={Journal d'Analyse Mathematique},
year={1990},
volume={55},
number={1},
pages={117-171},
}

\end{biblist}
\end{bibsection}

\end{document}